\newcommand{\CM}{Cohen-Macaulay}
\newcommand{\wrt}{with respect to}
\newcommand{\m}{\mathfrak{m} }
\newcommand{\Bcal}{\mathcal{B} }
\newcommand{\M}{\mathfrak{M} }
\newcommand{\R}{\mathcal{R} }
\newcommand{\F}{\mathcal{F} }
\newcommand{\G}{\mathcal{G} }
\newcommand{\Z}{\mathbb{Z} }
\newcommand{\rt}{\rightarrow}
\newcommand{\xar}{\longrightarrow}
\newcommand{\ov}{\overline}
\newcommand{\wh}{\widehat }
\newcommand{\wt}{\widetilde }
\newcommand{\reg}{\operatorname{reg}}
\newcommand{\depth}{\operatorname{depth}}
\newcommand{\e}{\operatorname{end}}
\newcommand{\red}{\operatorname{red}}
\newcommand{\Syz}{\operatorname{Syz}}
\theoremstyle{plain}
\newtheorem{theorem}{Theorem}[section]
\newtheorem{corollary}[theorem]{Corollary}
\newtheorem{lemma}[theorem]{Lemma}
\newtheorem{proposition}[theorem]{Proposition}
\theoremstyle{definition}
\newtheorem{example}[theorem]{Example}
\theoremstyle{remark}
\begin{document}

\title{On generalized  Narita ideals}
\author{Tony~J.~Puthenpurakal}
\date{\today}
\address{Department of Mathematics, IIT Bombay, Powai, Mumbai 400 076, India}

\email{tputhen@gmail.com}
\subjclass{Primary 13A30; Secondary 13D40, 13D07,13D45}

\keywords{multiplicity, blow-up algebra's, Ratliff-Rush filtration, Hilbert functions, maximal \CM \ modules}

 \begin{abstract}
Let $(A,\m)$ be a \CM \ local ring of dimension $d \geq 2$. An $\m$-primary ideal $I$ is said to be a generalized  Narita ideal if $e_i^I(A) = 0$ for $2 \leq i \leq d$. If $I$ is a generalized  Narita ideal and $M$ is a maximal \CM \ $A$-module then we show $e_i^I(M) = 0$ for $2 \leq i \leq d$. We also have $G_I(M)$ is generalized  \CM.  Furthermore we show that there exists $c_I$ (depending only on $A$ and $I$) such that   $\reg G_I(M) \leq c_I$.
\end{abstract}
 \maketitle
\section{introduction}
Let $(A,\m)$ be a \CM \ local ring of dimension $d$ and let $I$ be an $\m$-primary ideal. Let $G_I(A) = \bigoplus_{n \geq 0}I^n/I^{n+1}$ be the associated graded ring of $I$. Let $M$ be a MCM
(maximal \CM) $A$-module. Let $G_I(M) = \bigoplus_{n \geq 0}I^nM/I^{n+1}M$ be the associated graded module of $M$ with respect to $I$ (considered as a $G_I(A)$-module).

We assume for the time being that $A$ is complete. Then usually  $A$ is \emph{not} of finite representation type. In particular there are infinitely many non-isomorphic indecomposable MCM $A$-modules. In this paper we are interested in properties of $I$ which impose conditions on $G_I(M)$\textit{ for all} MCM $A$-modules.
One such property is when $I$ has minimal multiplicity. Then it is easy to check that every MCM $A$-module $M$ has minimal multiplicity with respect to $I$.
In this paper we give another class of ideals for which we can conclude good properties of all $G_I(M)$ from a condition on $I$.

Narita showed that if $A$ is \CM \ of dimension two and $e_2^I(A) = 0$ then reduction number of $I^n$ is one for all $n \gg 0$. In particular reduction number of $I^n$ with respect to MCM $A$-module $M$ is one for all $n \gg 0$, \cite{N}. So $e_2^I(M) = 0$ and  $G_{I^n}(M)$ is \CM \ for all $n \gg 0$. In particular $G_I(M)$ is generalized \CM.
Narita's result does not directly generalize to higher dimensions. However in dimension two Narita's result $e_2^I(M) = 0$ is equivalent to the fact that $\wt{G}_I(M)$, the associated graded module of the Ratliff-Rush filtration of $M$ with respect to $I$, is \CM \ with minimal multiplicity. In this form Narita's result was generalized to higher dimensions. In \cite[6.2]{Pu6} we proved that if $M$ is \CM \ of dimension $r \geq 2$ then $e_i^I(M) = 0$ for $2\leq i \leq r$ if and only if $\wt{G}_I(M)$, the associated graded module of the Ratliff-Rush filtration of $M$ with respect to $I$, is \CM \ with minimal multiplicity.
Motivated by this result we define an $\m$-primary ideal in $A$ to be a \emph{generalized Narita ideal} if $e_i^I(A) =0$ for $2\leq i \leq d$. For such ideals we prove the following result

\begin{theorem}\label{main}
Let $(A,\m)$ be a \CM \ local ring of dimension $d \geq 2$. Let $I$ be a generalized  Narita ideal. Let  $M$ be a MCM $A$-module. Then
 \begin{enumerate}[\rm (1)]
   \item $e_i^I(M) = 0$ for $2 \leq i \leq d$.
   \item $\wt{G}_I(M)$ is \CM \ with minimal multiplicity.
   \item $G_{I^n}(M)$ is \CM \ for all $n \gg 0$.
   \item $G_I(M)$ is generalized \CM.
 \end{enumerate}
 \end{theorem}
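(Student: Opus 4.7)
My plan is to prove (3) first using \cite[6.2]{Pu6} applied to $A$, deduce (1) from (3) by comparing Hilbert coefficients of $I$ with those of $I^n$, and then obtain (2) and (4) as corollaries. Via the faithfully flat extension $A \to A[X]_{\m A[X]}$, I may assume the residue field $A/\m$ is infinite.

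To prove (3), I apply \cite[6.2]{Pu6} to $A$: since $e_i^I(A) = 0$ for $2 \leq i \leq d$, $\wt{G}_I(A)$ is \CM \ with minimal multiplicity. A standard Ratliff--Rush argument then produces, for every $n \gg 0$, a minimal reduction $J_n \subseteq I^n$ with $d$ generators satisfying $I^{2n} = J_n I^n$. Multiplying by the MCM module $M$ gives $I^{2n}M = J_n I^n M$, so the reduction number of $I^n$ on $M$ is at most one. Choosing the generators of $J_n$ generically in $I^n/\m I^n$ (possible since $A/\m$ is infinite) guarantees they also form a superficial sequence for $I^n$ on $M$. The Sally--Valla criterion in dimension $d$ (reduction number at most one plus a superficial system of parameters on a \CM \ module implies \CM \ associated graded with minimal multiplicity) then yields $G_{I^n}(M)$ \CM \ with minimal multiplicity for all $n \gg 0$, proving (3).

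For (1), statement (3) gives $e_j^{I^n}(M) = 0$ for $2 \leq j \leq d$. For the top coefficient $j = d$, evaluating the identity $P_M^{I^n}(k) = P_M^I(nk + n - 1)$ at $k = -1$ yields $P_M^{I^n}(-1) = P_M^I(-1)$; since $P_M^L(-1) = (-1)^d e_d^L(M)$ for any $\m$-primary ideal $L$, we conclude $e_d^I(M) = e_d^{I^n}(M) = 0$. For the middle coefficients $2 \leq j \leq d - 1$ (only relevant when $d \geq 3$), I induct on $d$, the base case $d = 2$ being subsumed in the $j = d$ argument above. In the inductive step, pick $x \in I$ superficial for $I$ on both $A$ and $M$. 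The standard reduction formula applied to $A$ shows $I(A/xA)$ is a generalized Narita ideal in the $(d-1)$-dimensional \CM \ ring $A/xA$, and $M/xM$ is MCM over $A/xA$, so the inductive hypothesis gives $e_j^{I(A/xA)}(M/xM) = 0$ for $2 \leq j \leq d - 1$. The same formula $e_j^I(M) = e_j^{I(A/xA)}(M/xM)$ for $j \leq d - 1$ then transfers this back to $M$.

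Statement (2) is immediate from (1) and \cite[6.2]{Pu6}. For (4), the Ratliff--Rush filtration stabilizes in high degrees ($\wt{I^k M} = I^k M$ for $k \gg 0$), so $\wt{G}_I(M)$ and $G_I(M)$ agree in every sufficiently large degree; hence their local cohomologies $H^i_{G_+}$ differ in each internal degree by a module of finite length. By (2), $H^i_{G_+}(\wt{G}_I(M)) = 0$ for $i < d$, forcing each $H^i_{G_+}(G_I(M))$ to have finite length for $i < d$, which is the generalized \CM \ property. The main technical hurdle I anticipate lies in the first step: the \emph{same} $J_n$ must witness $I^{2n} = J_n I^n$ in $A$ and be generated by an $M$-superficial sequence for $I^n$; this requires intersecting several Zariski-open loci of generic $d$-tuples in $I^n/\m I^n$, which is precisely where the infinite-residue-field hypothesis is used. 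Once this is secured, the remainder is Hilbert-polynomial comparison and routine Ratliff--Rush bookkeeping.
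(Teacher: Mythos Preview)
Your argument breaks at the very first step. You assert that because $\wt{G}_I(A)$ is Cohen--Macaulay with minimal multiplicity, ``a standard Ratliff--Rush argument'' yields, for all $n\gg 0$, a $d$-generated reduction $J_n\subseteq I^n$ with $I^{2n}=J_nI^n$; in other words, $\red(I^n)\leq 1$. This is false in dimension $d\geq 3$. Take $A=k[[x,y,z]]$ and $I=\m$: then $e_2^\m(A)=e_3^\m(A)=0$, so $\m$ is generalized Narita and $\wt G_\m(A)=k[x,y,z]$ is Cohen--Macaulay with $h(z)=1$. Yet for $n\geq 3$ the reduction number of $\m^n$ is at least $2$: any minimal reduction $J_n$ is a complete intersection of three forms of degree $n$, so $k[x,y,z]/J_n$ has top nonzero degree $3(n-1)\geq 2n$, whence $\m^{2n}\not\subseteq J_n$ and $(\m^n)^2\neq J_n\m^n$. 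Equivalently, one computes $h_{\m^3,A}(z)=10+16z+z^2$, so $e_2^{\m^3}(A)=1\neq 0$ and $G_{\m^3}(A)$ does \emph{not} have minimal multiplicity. This is precisely the obstruction flagged in the introduction: ``Narita's result does not directly generalize to higher dimensions.''

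Everything downstream collapses: you cannot conclude $e_j^{I^n}(M)=0$ for $2\leq j\leq d$, so the (correct) identity $e_d^I(M)=e_d^{I^n}(M)$ gives you nothing. Your inductive reduction modulo a superficial element does recover $e_i^I(M)=0$ for $2\leq i\leq d-1$, but the top coefficient $e_d^I(M)=0$ --- which is exactly what you need to invoke \cite[6.2]{Pu6} for (2), and hence for (3) and (4) --- remains out of reach by your method. The paper circumvents this by a completely different mechanism: it controls the local cohomology of $L^I(M)$ through the short exact sequence $0\to L^\F(\Syz^A_1 M)\to L^I(A^r)\to L^I(M)\to 0$ coming from a free cover, combining the known vanishing $H^i(L^I(A))=0$ for $1\leq i\leq d-1$ with Lemma~\ref{rachel} to force $H^i(L^I(M))=0$ for $1\leq i\leq d-1$. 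Cohen--Macaulayness of $\wt G_I(M)$ follows, and minimal multiplicity is obtained from the $a$-invariant bound $a_d(G_I(M))\leq a_d(G_I(A^r))=-d+1$, not from any reduction-number-one statement for powers of $I$.
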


 Next we show
 \begin{theorem}\label{reg}
 Let $(A,\m)$ be a \CM \ local ring of dimension $d \geq 2$. Let $I$ be an $\m$-primary generalized Narita ideal. Let $M$ be a MCM $A$-module. Then there exists $c_I$ (depending only on $A$ and $I$) such that
  $\reg G_I(M) \leq c_I$.
 \end{theorem}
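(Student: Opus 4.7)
The plan is to bound $\reg G_I(M)$ by comparing $G_I(M)$ with its Ratliff-Rush associated graded module $\wt{G}_I(M)$, whose structure is already fully controlled by Theorem~\ref{main}. By Theorem~\ref{main}(2), $\wt{G}_I(M)$ is \CM\ with minimal multiplicity, so $\reg \wt{G}_I(M) \leq 1$, a bound independent of $M$ (a \CM\ graded module with minimal multiplicity has regularity at most $1$: after reducing modulo a maximal regular sequence of linear forms the resulting Artinian module is concentrated in degrees $\leq 1$). Since $\wt{I^n M} = I^n M$ for $n \gg 0$, the natural graded homomorphism $G_I(M) \to \wt{G}_I(M)$ has finite-length kernel $K$ and cokernel $D$. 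Chasing the associated long exact sequences of local cohomology at the irrelevant ideal $\M$ of $G_I(A)$ gives $H^0_\M(G_I(M)) \cong K$, $H^1_\M(G_I(M)) \cong D$, $H^i_\M(G_I(M)) = 0$ for $2 \leq i \leq d-1$, and $H^d_\M(G_I(M)) \cong H^d_\M(\wt{G}_I(M))$, whence
\[
\reg G_I(M) \leq \max\{\, 1,\ t(K),\ t(D)+1\,\},
\]
where $t(\cdot)$ denotes the largest nonzero degree. The task reduces to bounding $t(K)$ and $t(D)$ uniformly.

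I would pass to the faithfully flat extension $A[X]_{\m A[X]}$ to obtain an infinite residue field, and fix a minimal reduction $J = (x_1, \ldots, x_d)$ of $I$ in $A$ with reduction number $r := r_J(I)$; both depend only on $A$ and $I$. Since $I^{n+1} = JI^n$ in $A$ for $n \geq r$, we have $I^{n+1}M = JI^n M$ for every $A$-module $M$ and $n \geq r$. The minimal multiplicity of $\wt{G}_I(M)$ furnishes $\wt{I^{n+1}M} = J\wt{I^n M}$ for $n \geq 1$. Combining these, for $n \geq r$ multiplication by $J$ induces a surjection
\[
J \otimes_A \bigl(\wt{I^n M}/I^n M\bigr) \twoheadrightarrow \wt{I^{n+1}M}/I^{n+1}M.
\]
Consequently, once $\wt{I^n M} = I^n M$ for a single $n \geq r$, the equality persists in every higher degree, and both $K$ and $D$ vanish above the first such $n$. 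It thus suffices to exhibit a uniform $N = N(A,I)$ such that $\wt{I^N M} = I^N M$ for every MCM $M$.

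The main obstacle is precisely this uniform vanishing. A natural strategy invokes Theorem~\ref{main}(3): there exists $n_0 = n_0(A,I)$ with $G_{I^{n_0}}(A)$ \CM, and the proof of Theorem~\ref{main}(3) should yield the same $n_0$ for every MCM $M$. The \CM\ property of $G_{I^{n_0}}(M)$ forces vanishing of the $I^{n_0}$-adic Ratliff-Rush closure discrepancy of $(I^{n_0})^k M$ for all $k \geq 1$. The delicate step is then to transfer this vanishing from the $I^{n_0}$-adic to the $I$-adic Ratliff-Rush closure, since a priori $\wt{I^{\,n_0 k} M}_I$ can be strictly larger than $\wt{(I^{n_0})^k M}_{I^{n_0}}$; this transfer should be achievable by combining the multiplicative surjectivity above with the \CM\ minimal-multiplicity structure of $\wt{G}_I(M)$, so that the ``extra'' elements in the $I$-adic closure are absorbed through iterated multiplication by $J$. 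Setting $c_I := \max\{1, N\}$ then yields the stated bound.
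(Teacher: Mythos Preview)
Your reduction is correct and essentially matches the paper's: both arguments identify $H^0_\M(G_I(M))$ and $H^1_\M(G_I(M))$ with pieces controlled by the Ratliff--Rush discrepancy $\wt{I^nM}/I^nM$, show $H^i_\M(G_I(M))=0$ for $2\le i\le d-1$, and bound $a_d$ via $\wt{G}_I(M)$. The paper does this through the first fundamental sequence $0\to G_I(M)\to L^I(M)\to L^I(M)(-1)\to 0$, obtaining $a_0(G_I(M))<a_1(G_I(M))=\e H^0(L^I(M))+1$; since $H^0(L^I(M))_n=\wt{I^{n+1}M}/I^{n+1}M$, this is exactly your $t(K),\,t(D)$ bound in different notation.

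The genuine gap is your step~6--7. Your proposed route is circular: the uniformity of $n_0$ in Theorem~\ref{main}(3) is \emph{not} proved in the paper independently of the regularity bound---the statement ``$G_{I^n}(M)$ is \CM\ for $n\gg 0$'' is deduced from the \CM ness of $\wt{G}_I(M)$, and the implicit threshold is precisely the Ratliff--Rush stabilization degree of $M$, which is what you are trying to bound. Even granting a uniform $n_0$, your transfer from the $I^{n_0}$-adic to the $I$-adic Ratliff--Rush closure is only sketched; the inclusion $\wt{(I^{n_0})^kM}_{I^{n_0}}\subseteq \wt{I^{n_0k}M}_I$ can be strict, and your ``absorption'' argument via $J$-multiplication does not close this off.

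The paper supplies exactly the missing uniform bound, but through a different mechanism: part~(6) of Theorem~\ref{main-body} asserts that for a projective cover $H=A^r\to M$ the induced map $H^0(L^I(H))\to H^0(L^I(M))$ is \emph{surjective}. This immediately gives
\[
\e H^0(L^I(M)) \le \e H^0(L^I(H)) = \e H^0(L^I(A)),
\]
so $c_I=\e H^0(L^I(A))+2$ works. The surjectivity in (6) is the real content: it requires showing $H^1(L^\F(N))=0$ for the induced filtration $\F$ on the syzygy $N=\Syz^A_1(M)$, which is handled by Lemma~\ref{rachel} (vanishing of $H^i(L^\F(N))$ for filtrations with $e_i^\F=0$ and finite-length lower cohomology). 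This is the key idea your argument lacks.
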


An easy corollary to Theorem \ref{main}
is
\begin{corollary}\label{e2r}
Let $(A,\m)$ be a \CM \ local ring of dimension $d \geq 2$. Let $I$ be an $\m$-primary ideal with $ e_i^I(A) = 0$ for $2 \leq i \leq r$ where $r \leq d$. Let  $M$ be a MCM $A$-module. Then
$e_i^I(M) = 0 $ for $2\leq i \leq r$.
\end{corollary}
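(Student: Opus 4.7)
The plan is to reduce to the case already handled by Theorem~\ref{main} by cutting down $A$ and $M$ modulo a superficial sequence of length $d-r$, so that the residual ideal becomes a genuine generalized Narita ideal in the smaller ring. If $r = d$ the hypothesis is precisely that $I$ is a generalized Narita ideal and the conclusion is Theorem~\ref{main}(1); if $r \leq 1$ the conclusion is vacuous. So one may assume $2 \leq r < d$.

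By the standard faithfully flat extension $A \to A[X]_{\m A[X]}$, which preserves the \CM \ and MCM hypotheses and all Hilbert coefficients $e_i^I$ on both $A$ and $M$, one may assume that $A/\m$ is infinite. Then one can choose $x_1, \ldots, x_{d-r} \in I$ forming a superficial sequence for $I$ simultaneously with respect to both $A$ and $M$. Since $A$ is \CM \ and $M$ is MCM, both of depth $d$, this sequence is regular on both. Put $\ov{A} = A/(x_1, \ldots, x_{d-r})$, $\ov{M} = M/(x_1, \ldots, x_{d-r})M$, and let $\ov{I}$ be the image of $I$ in $\ov{A}$. Then $\ov{A}$ is \CM \ local of dimension $r$ and $\ov{M}$ is an MCM $\ov{A}$-module.

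Apply the classical invariance formula $e_i^J(N/xN) = e_i^J(N)$ for $0 \leq i \leq \dim N - 1$ (valid when $x \in J$ is both $N$-regular and $N$-superficial) inductively down the sequence. This yields
$$e_i^{\ov{I}}(\ov{A}) = e_i^I(A) \qquad \text{and} \qquad e_i^{\ov{I}}(\ov{M}) = e_i^I(M) \quad \text{for } 0 \leq i \leq r.$$
In particular $e_i^{\ov{I}}(\ov{A}) = 0$ for $2 \leq i \leq r$, so $\ov{I}$ is a generalized Narita ideal in the \CM \ ring $\ov{A}$ of dimension $r$. Theorem~\ref{main}(1) applied to the triple $(\ov{A}, \ov{I}, \ov{M})$ then gives $e_i^{\ov{I}}(\ov{M}) = 0$ for $2 \leq i \leq r$, and hence $e_i^I(M) = 0$ in the same range.

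There is no substantive obstacle once Theorem~\ref{main} is in hand; the only things needing care are the existence of a simultaneously superficial sequence (routine over an infinite residue field) and the preservation of the lower Hilbert coefficients under regular superficial reduction, both of which are standard in this circle of ideas.
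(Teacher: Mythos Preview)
Your argument is correct. The paper does not give an explicit proof of this corollary, merely stating it as an easy consequence of Theorem~\ref{main}; your reduction modulo a simultaneously superficial sequence of length $d-r$ to land in a ring of dimension $r$ where $\ov{I}$ is genuinely generalized Narita, followed by an appeal to Theorem~\ref{main}(1), is precisely the natural argument the paper has in mind.
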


Next we relax the condition of generalized Narita ideals by one index, i.e., we consider the case when $e_2^I(A) = e_3^I(A) = \cdots = e_{d -1}^I(A) = 0$. In this case we prove
\begin{theorem}\label{almost}
Let $(A,\m)$ be a \CM \ local ring of dimension $d \geq 3$. Let $I$ be an $\m$-primary ideal with $ e_i^I(A) = 0$ for $2 \leq i \leq d -1$. Let  $M$ be a MCM $A$-module. Then $(-1)^de_d^I(M) \geq 0$.

(I) The following conditions are equivalent:
\begin{enumerate}[\rm (i)]
  \item  $e_d^I(M) = 0$.
  \item $G_I(M)$ is generalized \CM.
  \item $G_{I^n}(M)$ is \CM \ for all $n \gg 0$.
\end{enumerate}

(II)  The following conditions are equivalent:
\begin{enumerate}[\rm (a)]
  \item  $e_d^I(M) \neq  0$.
  \item $ \depth G_{I^n}(M) = 1$  for all $n \gg 0$.
\end{enumerate}
\end{theorem}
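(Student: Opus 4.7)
The strategy is to reduce to dimension two via a superficial sequence and invoke Theorem \ref{main}, then bridge back to dimension $d$ using a cohomological analysis of the top Hilbert coefficient $e_d^I(M)$, which the reduction itself does not see.

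First, Corollary \ref{e2r} applied with $r = d-1$ immediately yields $e_i^I(M) = 0$ for all $2 \leq i \leq d-1$, so the only Hilbert coefficient of $M$ beyond $e_0^I(M)$ and $e_1^I(M)$ that can fail to vanish is $e_d^I(M)$. Next choose a superficial sequence $x_1, \ldots, x_{d-2}$ for $I$ with respect to both $A$ and $M$, and put $B = A/(x_1, \ldots, x_{d-2})$, $N = M/(x_1, \ldots, x_{d-2})M$, $J = IB$. Then $B$ is \CM \ of dimension two, $N$ is a MCM $B$-module, and the standard preservation of Hilbert coefficients under superficial reduction gives $e_i^J(B) = e_i^I(A)$ and $e_i^J(N) = e_i^I(M)$ for $0 \leq i \leq 2$; in particular $e_2^J(B) = e_2^J(N) = 0$. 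Thus $J$ is a generalized Narita ideal in the dimension-two ring $B$, and Theorem \ref{main} applied in dimension two yields that $\wt{G}_J(N)$ is \CM \ with minimal multiplicity and $G_{J^n}(N)$ is \CM \ for all $n \gg 0$.

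To handle $e_d^I(M)$ and to transfer conclusions back to dimension $d$, the plan is to use \cite[6.2]{Pu6}: $\wt{G}_I(M)$ is \CM \ with minimal multiplicity if and only if $e_i^I(M) = 0$ for $2 \leq i \leq d$. Since this vanishing is already known for $i \leq d-1$, the single coefficient $e_d^I(M)$ exactly measures the defect. The sign $(-1)^d e_d^I(M) \geq 0$ together with the equivalence (i) $\Leftrightarrow$ (ii) should follow from expressing $(-1)^d e_d^I(M)$ as a non-negative sum involving graded lengths of the local cohomology modules $H^i_{G_+}(\wt{G}_I(M))$ in non-positive degrees, where $G_+$ denotes the irrelevant ideal of $G_I(A)$; this expression would force $e_d^I(M) = 0$ to be equivalent to all these cohomologies vanishing in the appropriate range, which in turn is equivalent to $G_I(M)$ being generalized \CM. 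For (ii) $\Leftrightarrow$ (iii), the generalized \CM \ property of $G_I(M)$ (finite-length non-top local cohomology) is converted into genuine \CM-ness after passing to a large Veronese $G_{I^n}(M)$ for $n \gg 0$, since high-degree shifts wash out finite-length contributions; in the other direction, a Sally-machine lift of the dimension-two conclusion $G_{J^n}(N)$ \CM \ through the superficial sequence $x_1, \ldots, x_{d-2}$ puts $G_{I^n}(M)$ in a generalized \CM \ position. Part (II) is then immediate from (I) together with the observation that one step of Sally lifting already yields $\depth G_{I^n}(M) \geq 1$ for $n \gg 0$, so $G_{I^n}(M)$ is either \CM \ (corresponding to $e_d^I(M) = 0$) or has depth exactly one (corresponding to $e_d^I(M) \neq 0$).

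The main obstacle will be the dimension-$d$ cohomological analysis required to bridge the dimension-two reduction back to a statement about $e_d^I(M)$ and $\depth G_{I^n}(M)$: the superficial reduction is blind to $e_d^I(M)$, so the proof cannot remain in dimension two and must engage directly with the graded local cohomology of $\wt{G}_I(M)$ (or equivalently $G_I(M)$) in the original dimension $d$ in order to extract both the sign $(-1)^d e_d^I(M) \geq 0$ and the precise link between its vanishing and the \CM-ness of $\wt{G}_I(M)$.
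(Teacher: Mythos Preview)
Your broad outline---reduce modulo superficial elements and then read off $e_d^I(M)$ cohomologically---matches the paper, but two of the load-bearing steps do not go through as stated.

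First, the ``non-negative sum'' you invoke for $(-1)^d e_d^I(M)$ is not available for free. The Grothendieck--Serre formula for the extended Rees module $\widehat{\mathcal R}(M)$ expresses $(-1)^d e_d^I(M)$ as an \emph{alternating} sum of lengths of graded pieces of local cohomology. What collapses it to a single nonnegative term is the vanishing $H^i(L^I(M))=0$ for $2\le i\le d-1$, and this is the heart of the paper's argument: one reduces by a \emph{single} superficial element $x$, observes that in dimension $d-1$ the ideal $\bar I$ is now a genuine generalized Narita ideal, applies Theorem~\ref{main-body} to get $H^i(L^I(\bar M))=0$ for $1\le i\le d-2$, and then lifts via the second fundamental exact sequence. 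The resulting identity is the exact formula $(-1)^d e_d^I(M)=\ell\big(H^1(L^I(M))_{-1}\big)$. Your reduction all the way to dimension~$2$ never produces this intermediate vanishing back in dimension~$d$, and without it neither the sign of $e_d^I(M)$ nor the link between its vanishing and the generalized \CM\ property of $G_I(M)$ follows from what you have written.

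Second, your Sally-machine step, taken at face value, proves too much. Sally's lemma says that if $x$ is $M$-superficial for $I$ and $\depth G_I(M/xM)\ge 1$ then $\depth G_I(M)=\depth G_I(M/xM)+1$. Iterating the lift of $G_{J^n}(N)$ \CM\ (depth~$2$) through $x_1,\ldots,x_{d-2}$ would force $\depth G_{I^n}(M)=d$ for all $n\gg0$ unconditionally, contradicting part~(II). The lift fails because an $I$-superficial element $x$ is not what one feeds into Sally's lemma for the $I^n$-adic filtration: for $n\ge 2$ one needs an element of $I^n$ (e.g.\ $x^n$), and the relevant quotient $M/x^nM$ is not the $\bar M$ you already analyzed. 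The paper bypasses this entirely: once $(-1)^d e_d^I(M)=\ell(H^1(L^I(M))_{-1})$ is in hand, \cite[9.2]{Pu5} gives $\depth G_{I^n}(M)=1$ for $n\gg0$ when this length is nonzero, while \ref{prev-narita} and \cite[7.8]{Pu5} give the \CM\ conclusions when it vanishes. Your dichotomy ``depth~$1$ or depth~$d$'' is correct, but it is a \emph{consequence} of that identity, not of a Sally lift.
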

\section{Preliminaries}
In this paper all rings considered are Noetherian and all modules (unless stated otherwise) are assumed to be finitely generated.
Let $A$ be a local ring, $I$ an ideal in $A$ and let $N$ be an $A$-module. Then set
 $\ell(N)$ to be length of $N$ and $\mu(N)$ the number of minimal generators of $N$.

\s Let $A$ be local and let $I$ be an $\m$-primary ideal. Let $M$ be an  $A$-module.
Recall an $I$-\textit{filtration} $\F = {\{\F_n\}}_{n \geq 0}$ on $M$ is a
collection of submodules of $M$ with the properties
\begin{enumerate}
\item
$\F_n \supseteq \F_{n+1}$ for all $n \geq 0$,
\item
$\F_0 = M$,
\item
$I \F_n \subseteq \F_{n+1}$ for all $n \geq 0$.
\end{enumerate}
If
$I \F_n = \F_{n+1}$ for all $n \gg 0$ then we say $\F$ is $I$-\emph{stable}.
\emph{We do NOT assume $\F_1 \neq M$}
\s Let $\F$ be an $I$-stable filtration on  $M$. Then the function $H^{(1)}(\F, n) = \ell(M/\F_{n+1})$ is of polynomial type of degree $r = \dim M$, i.e., there exists a polynomial $P_\F^{(1)}(X) \in \mathbb{Q}[X]$
such that $P_\F^{(1)}(n) = \ell(M/\F_{n+1}) $ for all $n \gg 0$. We write
\[
P_\F^{(1)}(X) = e_0^\F(M)\binom{X+d}{d} - e_1^\F(M) \binom{X+ d - 1}{d-1} + \cdots + (-1)^re_r^\F(M).
\]
The integers $e_i^\F(M)$ are called Hilbert coefficients of $M$ with respect to  $\F$. The integer $e_0^\F(M)$ is called the multiplicity of $M$  \wrt \  $\F$  and is always positive if $M \neq 0$.

\s Let $\F$ be an $I$-stable filtration on  $M$. The function $H(\F, n) = \ell(\F_n/\F_{n+1})$ is called the Hilbert function of $M$ with respect to $\F$. Let $G_\F(M) = \bigoplus_{n \geq 0} \F_n/\F_{n+1}$ be the associated graded module of $M$ \wrt \ $\F$.
It is well-known that $G_\F(M)$ is a finitely generated $G_I(A)$ module of dimension $r = \dim M$. It follows that
\[
\sum_{n \geq 0}H(\F, n)z^n = \frac{h_\F(z)}{(1-z)^r}.
\]
where $h_\F(z) \in \Z[z] $ and $h_\F(1) \neq 0$. It is easily verified that
$e_i^\F(M) = h_\F^{(i)}(1)/i!$ for $0 \leq i \leq r$. It is convenient to set $e_i^\F(M) = h_\F^{(i)}(1)/i!$ for all $i$.

\s An element $x \in I$ is said to $M$-superficial \wrt \ $\F$ if there exists $c$ such that $(\F_{n+1} \colon x)\cap \F_c = \F_n$ for all $n \gg 0$. It is well-known that superficial elements exist if the residue field $k = A/\m$ of $A$ is infinite. If $\depth M > 0$ it is easily verified that $x$ is $M$-regular. Furthermore $(\F_{n+1} \colon x) = \F_n$ for all $n \gg 0$.

\s Let $x$ be $M$-superficial \wrt \ $\F$. Consider the quotient filtration
$\ov{\F} = \{ \ov{\F_n} = (\F_n + xM)/xM \}_{n \geq 0}$. If $x$ is also $M$-regular then
$e_i^{\ov{\F}}(\ov{M}) = e_i^\F(M)$ for $i = 0, \ldots, \dim M -1$ (see proof in \cite[Corollary 10]{Pu1} for the adic case which generalizes).

\s If the residue field of $A$ is finite then we go the extension $A' = A[X]_{\m A[X]}$.
If $E$ is an $A$-module then set $E' = E \otimes_A A'$. If $\F = \{ \F_n \}$ is an $I$-stable filtration on $M$ then $\F' = \{ \F_n' \}$ is an $I'$-stable filtration on $M'$.
Furthermore we have $\depth G_\F(M) =  \depth G_{\F'}(M')$ and $e_i^\F(M) = e_i^{\F'}(M')$
for all $i \geq 0$.

\s Let $x \neq 0$. Say  $x \in I^n \setminus I^{n+1}$. The image of $x$ in $I^n/I^{n+1}$ is denoted by $x^*$ and is called the initial form of $x$.
 \section{Ratliff-Rush filtration of an $I$-stable filtration}
 In this section $(A,\m)$ is a local ring with infinite residue field, $I$ is an $\m$-primary ideal and $N$ is an $A$-module with $\depth N > 0$. Assume $\dim N = t$. Let $\F = \{\F_n \}_{n \geq 0}$ be an $I$-stable filtration on $N$. In this section we define Ratliff-Rush filtration of $\F$. When $N = A$ this has been done earlier in \cite{H} and \cite{B}.

 \s We have an ascending chain of submodules of $N$,
 $$ \F_n \subseteq (\F_{n+1} \colon I) \subseteq (\F_{n+2} \colon I^2) \subseteq \cdots \subseteq (\F_{n+r} \colon I^r) \subseteq \cdots. $$
 Set
 $$\wt{\F_n} = \bigcup_{r \geq 1}(\F_{n+r} \colon I^r).$$
 The main result of this section is
 \begin{proposition}\label{basic-rr}
 (with hypotheses as above) We have
 \begin{enumerate}[\rm (1)]
 \item $\wt{\F_n} = \F_n$ for all $n \gg 0$.
   \item  $\wt{\F} = \{ \wt{\F_n} \}_{n \geq 0}$ is an $I$-stable filtration on $N$.
   \item $e_i^{\wt{\F}}(N) = e_i^\F(N)$ for $i = 0, \ldots, t$.
   \item If $x$ is $N$-superficial with respect to $\F$ then $(\wt{\F_{n+1}} \colon x) = \wt{\F_n}$ for $n \geq 0$. In particular $x$ is $G_{\wt{\F}}(N)$-regular.
 \end{enumerate}
 \end{proposition}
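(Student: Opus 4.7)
The plan is to prove the four assertions in the order stated, using a superficial element as the main technical device and (1) as the foundation for everything else.

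For (1), I would start by picking $x \in I$ which is $N$-superficial with respect to $\F$. Since $\depth N > 0$, the cited property gives $(\F_{m+1} \colon x) = \F_m$ for all $m \geq n_0$. A short induction on $r$ then yields $(\F_{m+r} \colon x^r) = \F_{m-\text{wait}}$, more precisely: if $y x^r \in \F_{n+r}$ with $n \geq n_0$, then $y x^{r-1} \in (\F_{n+r} \colon x) = \F_{n+r-1}$, and iterating $r$ times gives $y \in \F_n$. Hence $(\F_{n+r} \colon x^r) = \F_n$ for all $n \geq n_0$, \emph{uniformly in $r$}. Since $x^r \in I^r$, we get $(\F_{n+r} \colon I^r) \subseteq (\F_{n+r} \colon x^r) = \F_n$, and the reverse inclusion is immediate from the filtration axiom. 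Therefore $\wt{\F_n} = \F_n$ for $n \geq n_0$.

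For (2), the descending property $\wt{\F_n} \supseteq \wt{\F_{n+1}}$ follows because any $y$ with $I^r y \subseteq \F_{n+1+r} \subseteq \F_{n+r}$ lies in $(\F_{n+r}\colon I^r) \subseteq \wt{\F_n}$. The condition $I \wt{\F_n} \subseteq \wt{\F_{n+1}}$ is checked directly: if $I^r y \subseteq \F_{n+r}$, then $I^r (Iy) \subseteq I\F_{n+r} \subseteq \F_{n+1+r}$. And $\wt{\F_0} = N$ is trivial. Finally, $I$-stability of $\wt{\F}$ is forced by (1): once $n$ is large enough that both $\wt{\F_n} = \F_n$ and $I\F_n = \F_{n+1}$, equality $I\wt{\F_n} = \wt{\F_{n+1}}$ is immediate. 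Part (3) is then free, because two filtrations that agree eventually have the same Hilbert polynomial and hence the same $e_i$ for $0 \leq i \leq t$.

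For (4), one containment is formal: $x \in I$ combined with $I\wt{\F_n} \subseteq \wt{\F_{n+1}}$ gives $\wt{\F_n} \subseteq (\wt{\F_{n+1}} \colon x)$. For the reverse, suppose $xy \in \wt{\F_{n+1}}$, so $I^r x y \subseteq \F_{n+1+r}$ for some $r$. Enlarging $r$ freely (using $I^{s-r}\F_{n+1+r} \subseteq \F_{n+1+s}$), we may assume $n+1+r \geq n_0$. Then $x(I^r y) \subseteq \F_{n+1+r}$ and the superficial property gives $I^r y \subseteq (\F_{n+1+r}\colon x) = \F_{n+r}$, so $y \in (\F_{n+r}\colon I^r) \subseteq \wt{\F_n}$. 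This proves $(\wt{\F_{n+1}}\colon x) = \wt{\F_n}$ for every $n \geq 0$, and passing to $G_{\wt{\F}}(N)$ translates this equality into the statement that $x^*$ is a nonzerodivisor.

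The main obstacle in this plan is step (1), and specifically the need for a bound $n_0$ that is \emph{independent of $r$}. This is what makes the superficial element so essential: without it, one only knows $(\F_{n+r}\colon I^r) = \F_n$ for $n$ depending on $r$, which would be too weak to obtain $\wt{\F_n} = \F_n$ eventually. Replacing the colon by $I^r$ with the single element $x^r$ is the trick that decouples $n$ from $r$, and every subsequent part of the proposition piggybacks on this uniform statement.
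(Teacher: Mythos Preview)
Your proof is correct and follows essentially the same approach as the paper: pick a superficial element $x$, use $(\F_{n+r}\colon x^r)=\F_n$ for $n\geq n_0$ to deduce (1), derive the filtration axioms for (2) directly from the definition with $I$-stability forced by (1), read off (3) from eventual agreement of the filtrations, and for (4) enlarge the exponent so that the superficial property applies. The paper's write-up is terser (it asserts $(\F_{n+j}\colon x^j)=\F_n$ without spelling out the induction and phrases the enlarging step as ``for all $j\gg 0$''), but the argument is the same in every part.
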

 \begin{proof}
   Let $x$ be $N$-superficial with respect to $\F$. In particular there exists $n_0$ such that $(\F_{n+1} \colon x) = \F_n$ for $n \geq n_0$.

   (1) Fix $n \geq n_0$. Let $m \in \wt{\F}_n$. Then $I^jm \subseteq \F_{n+j}$ for some $j \geq 1$. So $x^j m \in \F_{n+j}$. It follows that $m \in (\F_{n+j} \colon x^j) = \F_n$. The result follows.

   (2) Let $p \in \wt{\F_{n+1}}$. Then $I^jp \subseteq \F_{n+j+1}$ for all $j \gg 0$. We also have $\F_{n+j+1} \subseteq \F_{n+j}$. So $p \in \wt{\F_n}$.

   Let $m \in \wt{\F_n}$ and let $\alpha \in I^j$. We have $I^im \subseteq \F_{i + n}$  for some $i$. So $I^j \alpha m \subseteq \F_{i + j + n}$. It follows that $\alpha m \in \wt{\F}_{j + n}$.
    So
   $\wt{\F}$ is an $I$-filtration.

    By (1) and as $\F$ is $I$-stable filtration on $N$ it follows that $\wt{\F}$ is $I$-stable filtration on $N$.

   (3) This follows from (1).

   (4) By (2) it follows that $\wt{\F_n} \subseteq (\wt{\F_{n+1}} \colon x) $ for all $n \geq 0$. Let $m \in (\wt{\F_{n+1}} \colon x)$.
   So $I^jxm \in \F_{n+1+j} $ for all $j \gg 0$. As $x$ is $N$-superficial with respect to $\F$ it follows that $I^jm \in \F_{n+j}$. So $m \in \wt{\F_n}$.
 \end{proof}

 \section{The $L$-construction for filtration's}\label{Lprop}
In \cite{Pu5} we introduced a new technique to investigate problems relating to associated graded modules. This was done for the adic-case. The technique can be extended to filtrations and analogous properties can be proved (with the same proofs).
In this section we collect all the relevant results which we proved in \cite{Pu5} in the adic-case. Throughout this section
$(A,\m)$ is a  local ring with infinite residue field, $M$ is a \emph{\CM }\ module of dimension
$r \geq 1$ and $I$ is an $\m$-primary ideal. Furthermore $\F = \{ \F_n \}_{n \geq 0}$ is an $I$-stable filtration on $M$ with $\F_0 = M$. We \emph{do not} insist that $\F_1 \neq M$.
Set $L^\F(M) = \bigoplus_{n\geq 0}M/\F_{n+1}$.
\s \label{mod-struc} Set $\R = A[I t]$;  the Rees Algebra of $I$.
Let $\R(\F, M) = \bigoplus_{n \geq 0}M_n$ be the Rees module of $M$ with respect to $\F$.
We note that  $M[t]/\R(\F, M) = L^\F(M)(-1)$.
Thus $L^\F(M)$ is a $\R$-module. Note $L^\F(M)$ is NOT a finitely generated $\R$-module.

\s Set $\M = \m\oplus \R_+$. Let $H^{i}(-) = H^{i}_{\M}(-)$ denote the $i^{th}$-local cohomology functor \wrt \ $\M$. Recall a graded $\R$-module $E$ is said to be
*-Artinian if
every descending chain of graded submodules of $E$ terminates. For example if $U$ is a finitely generated $\R$-module then $H^{i}(U)$ is *-Artinian for all
$i \geq 0$.

\s \label{Artin}
For   $0 \leq i \leq  r - 1$
\begin{enumerate}[\rm (a)]
\item
$H^{i}(L^\F(M))$ are  *-Artinian $\R$-modules; see \cite[4.4]{Pu5}.
\item
$H^{i}(L^\F(M))_n = 0$ for all $n \gg 0$; see \cite[1.10]{Pu5}.
\item
 $H^{i}(L^\F(M))_n$  has finite length
for all $n \in \mathbb{Z}$; see \cite[6.4]{Pu5}.
\item
$\ell(H^{i}(L^\F(M))_n)$  coincides with a polynomial for all $n \ll 0$; see \cite[6.4]{Pu5}.
\end{enumerate}

\s \label{l-rr}Let $\F$ be an $I$-stable filtration on $N$ and let $\wt{\F}$ be its Ratliff-Rush filtration. We consider the short exact sequence
 \[
 0 \rt \bigoplus_{n \geq 0} \frac{\wt{\F_{n+1}}}{\F_{n + 1}} \rt L^\F(N) \rt L^{\wt{\F}}(N) \rt 0.
 \]
 We have $H^0(L^{\wt{\F}}(N)) = 0$ by \ref{basic-rr}(4). Also $\bigoplus_{n \geq 0} \frac{\wt{\F_{n+1}}}{\F_{n + 1}} $ has finite length by \ref{basic-rr}(1). It follows that
 $$H^0(L^\F(N)) = \bigoplus_{n \geq 0} \frac{\wt{\F_{n+1}}}{\F_{n + 1}}.$$
 and
 $$H^i(L^\F(N)) = H^i(L^{\wt{\F}}(N)) \quad \text{for $i \geq 1$}.$$

 \s \label{I-FES} The natural maps $0\rt \F_n/\F_{n+1} \rt M/ \F_{n+1}\rt M/\F_n \rt 0 $ induce an exact
sequence of $\R$-modules
\begin{equation}
\label{dag}
0 \xar G_{\F}(M) \xar L^\F(M) \xrightarrow{\Pi} L^\F(M)(-1) \xar 0.
\end{equation}
We call (\ref{dag}) \emph{the first fundamental exact sequence}.  We use (\ref{dag}) also to relate the local cohomology of $G_\F(M)$ and $L^\F(M)$.

\s \label{II-FES} Let $x$ be  $M$-superficial \wrt \ $\F$ and set  $N = M/xM$ and $u =xt \in \R_1$. Let $\ov{\F} = \{ (\F_n + xM)/xM \}_{n \geq 0}$ be the quotient filtration on $N$. Notice $$L^\F(M)/u L^\F(M) = L^{\ov{\F}}(N)$$
For each $n \geq 1$ we have the following exact sequence of $A$-modules:
\begin{align*}
0 \xar \frac{\F_{n+1}\colon x}{\F_n} \xar \frac{M}{\F_n} &\xrightarrow{\psi_n} \frac{M}{\F_{n+1}} \xar \frac{N}{\ov{\F}_{n+1}} \xar 0, \\
\text{where} \quad \psi_n(m + M_n) &= xm + \F_{n+1}.
\end{align*}
This sequence induces the following  exact sequence of $\R$-modules:
\begin{equation}
\label{dagg}
0 \xar \Bcal^{\F}(x,M) \xar L^{\F}(M)(-1)\xrightarrow{\Psi_u} L^{\F}(M) \xrightarrow{\rho^x}  L^{\ov{\F}}(N)\xar 0,
\end{equation}
where $\Psi_u$ is left multiplication by $u$ and
\[
\Bcal^{\F}(x,M) = \bigoplus_{n \geq 0}\frac{(\F_{n+1}\colon_M x)}{\F_n}.
\]
We call (\ref{dagg}) the \emph{second fundamental exact sequence. }
\s \label{long-mod} Notice  $\ell\left(\Bcal^{\F}(x,M) \right) < \infty$. A standard trick yields the following long exact sequence connecting
the local cohomology of $L^\F(M)$ and
$L^{\ov{\F}}(N)$:
\begin{equation}
\label{longH}
\begin{split}
0 \xar \Bcal^{\F}(x,M) &\xar H^{0}(L^{\F}(M))(-1) \xar H^{0}(L^{\F}(M)) \xar H^{0}(L^{\ov{\F}}(N)) \\
                  &\xar H^{1}(L^{\F}(M))(-1) \xar H^{1}(L^{\F}(M)) \xar H^{1}(L^{\ov{\F}}(N)) \\
                 & \cdots  \\
               \end{split}
\end{equation}

\s \label{Artin-vanish} We will use the following well-known result regarding *-Artinian modules quite often:

Let $V$ be a *-Artinian $\R$-module.
\begin{enumerate}[\rm (a)]
\item
$V_n = 0$ for all $n \gg 0$.
\item
If $\psi \colon V(-1) \rt V$ is a monomorphism then $V = 0$.
\item
If $\phi \colon V \rt V(-1)$ is a monomorphism then $V = 0$.
\end{enumerate}

\s \label{l-induct}(1) For $c \leq r -1$ we have $H^i(G_\F(M)) = 0$ for $i = 0, \ldots, c$ if and only if $H^i(L^\F(M)) = 0$ for $i = 0, \ldots, c$, see \ref{I-FES} and \ref{Artin-vanish}.

(2) Let $x \in I$ be $M$-superficial \wrt \ $\F$.  Let $\ov{\F}$ be the quotient filtration on $\ov{M} = M/xM$. Then for $a \geq 1$ we have that if $H^i(L^{\ov{\F}}(M/xM)) = 0$ for $a \leq i \leq s \leq r-2$ then
$H^i(L^\F(M)) = 0$ for $a + 1 \leq i \leq s+1$, see \ref{II-FES}  and \ref{Artin-vanish}.

 \section{A Lemma when $d = 2$}
 In this section we prove
 \begin{lemma}\label{e2} Let $(A, \m)$ be a \CM \ local ring and let $I$ be an $\m$-primary ideal. Let $N$ be a \CM \ $A$-module of dimension $2$ and let $\F = \{ \F_n \}_{n \geq 0}$ be an $I$-stable filtration on $N$. If $e_2^\F(N) = 0$ then $H^1(L^\F(N)) = 0$.
\end{lemma}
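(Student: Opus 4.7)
My plan is to reduce the statement to the vanishing of $H^0$ of a filtration in dimension $1$, and then to force that vanishing via a positivity of a ``virtual'' second Hilbert coefficient. Passing to $A[X]_{\m A[X]}$ if necessary, I assume the residue field is infinite. By Proposition~\ref{basic-rr}(3) and \ref{l-rr}, replacing $\F$ by $\wt{\F}$ preserves $e_i^\F(N)$ for $i = 0, 1, 2$, kills $H^0(L^\F(N))$, and leaves $H^1(L^\F(N))$ unchanged; so I assume $\F = \wt{\F}$. Picking an $N$-superficial element $x \in I$ and setting $\ov{N} = N/xN$ (Cohen-Macaulay of dimension $1$) with quotient filtration $\ov{\F}$, Proposition~\ref{basic-rr}(4) gives $\Bcal^\F(x, N) = 0$, so the second fundamental exact sequence \ref{II-FES} collapses to
\[
0 \rt L^\F(N)(-1) \xrightarrow{\Psi_u} L^\F(N) \rt L^{\ov{\F}}(\ov{N}) \rt 0.
\]

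Applying local cohomology and using $H^0(L^\F(N)) = 0$ produces
\[
0 \rt H^0(L^{\ov{\F}}(\ov{N})) \rt H^1(L^\F(N))(-1) \xrightarrow{\Psi_u^*} H^1(L^\F(N)) \rt \cdots .
\]
Since $H^1(L^\F(N))$ is $*$-Artinian by \ref{Artin}(a), the monomorphism criterion \ref{Artin-vanish}(b) reduces the lemma to showing $H^0(L^{\ov{\F}}(\ov{N})) = 0$. I control this via a length identity: because $\Bcal^\F(x, N) = 0$, the short exact sequence above gives $\ell(\ov{N}/\ov{\F}_{n+1}) = H(\F, n)$ for all $n \geq 0$, and summing over $n$ yields $h_{\ov{\F}}(z) = h_\F(z)$; in particular $e_2^{\ov{\F}}(\ov{N}) = e_2^\F(N) = 0$. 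Comparing $\ov{\F}$ with its Ratliff-Rush closure $\wt{\ov{\F}}$ (Proposition~\ref{basic-rr}(3) matches $e_0$ and $e_1$ for them), a Taylor expansion of $(h_{\ov{\F}}(z) - h_{\wt{\ov{\F}}}(z))/(1-z)^2$ at $z = 1$ produces
\[
\ell\bigl(H^0(L^{\ov{\F}}(\ov{N}))\bigr) = e_2^{\ov{\F}}(\ov{N}) - e_2^{\wt{\ov{\F}}}(\ov{N}) = -\, e_2^{\wt{\ov{\F}}}(\ov{N}).
\]

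The main obstacle is the reverse inequality $e_2^{\wt{\ov{\F}}}(\ov{N}) \geq 0$. To establish it, note that $\wt{\ov{\F}}$ is Ratliff-Rush, so $H^0(L^{\wt{\ov{\F}}}(\ov{N})) = 0$ and $G_{\wt{\ov{\F}}}(\ov{N})$ has positive depth; since $\ov{N}$ has dimension $1$, $G_{\wt{\ov{\F}}}(\ov{N})$ is Cohen-Macaulay. A superficial element $y \in I$ for $\ov{\F}$ satisfies $(\wt{\ov{\F}}_{n+1} : y) = \wt{\ov{\F}}_n$ by Proposition~\ref{basic-rr}(4), so $y^* \in G_I(A)_1$ is a nonzerodivisor on $G_{\wt{\ov{\F}}}(\ov{N})$. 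The $h$-vector of $\wt{\ov{\F}}$ then equals the Hilbert function of the finite length graded module $G_{\wt{\ov{\F}}}(\ov{N})/y^* G_{\wt{\ov{\F}}}(\ov{N})$, which is nonnegative term by term, forcing $e_2^{\wt{\ov{\F}}}(\ov{N}) = h_{\wt{\ov{\F}}}''(1)/2 \geq 0$. Both sides of the length identity then vanish, so $H^0(L^{\ov{\F}}(\ov{N})) = 0$, and the lemma follows.
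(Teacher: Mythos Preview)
Your proof is correct but follows a genuinely different route from the paper's. After the common first step of replacing $\F$ by its Ratliff-Rush closure, the paper stays in dimension two: using Proposition~\ref{dim2-prop} it writes $0 = e_2^\G(N) = \sum_{n \geq 1} n\,\ell(\G_{n+1}/J\G_n)$ for a superficial sequence $J = (x,y)$, concludes $\G_{n+1} = J\G_n$ for $n \geq 1$, and then invokes the Valabrega--Valla criterion to get $G_\G(N)$ Cohen--Macaulay, whence $H^1(L^\G(N)) = 0$ by~\ref{l-induct}. You instead reduce to dimension one via the second fundamental exact sequence and the $*$-Artinian monomorphism trick, turning the problem into showing $H^0(L^{\ov{\F}}(\ov{N})) = 0$; your length identity $\ell(H^0(L^{\ov{\F}}(\ov{N}))) = e_2^{\ov{\F}}(\ov{N}) - e_2^{\wt{\ov{\F}}}(\ov{N})$ together with the nonnegativity of the $h$-vector of the one-dimensional Cohen--Macaulay module $G_{\wt{\ov{\F}}}(\ov{N})$ then forces both terms to zero. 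The paper's argument is shorter and more concrete (it produces the explicit vanishing $\G_{n+1} = J\G_n$), while your cohomological reduction avoids Valabrega--Valla and the two-variable superficial computation of Proposition~\ref{dim2-prop}, relying only on the one-variable Ratliff-Rush machinery and the $*$-Artinian lemma; this style is closer in spirit to how the paper handles the higher-dimensional Lemma~\ref{rachel}.
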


\s\label{dim1}
Let $(A,\m)$ be a local ring with infinite residue field and let $I$ be an $\m$-primary ideal in $A$.
Let $N$ be \CM \ $A$-module of dimension one and let $\F = \{ \F_n \}_{n \geq 0}$ be an $I$-stable filtration on $N$. Let $x$ be $N$-superficial with respect to $\F$. Note $\F_{n+1} = x\F_n$ for all $n \gg 0$. Set $\rho^\F(z) = \sum_{n \geq 0} \ell(\F_{n+1}/x\F_n) z^n$.
\begin{proposition}\label{dim1-prop}
(with hypotheses as in \ref{dim1}) Then
\begin{enumerate}[\rm (1)]
  \item $H(\F, n) = e^\F(N) - \ell(\F_{n+1}/x\F_n)$ for all $n \geq 0$.
  \item $h^\F_N(z) = e^\F(N) + (z-1)\rho^\F(z)$.
  \item $e_1^\F(N) = \sum_{n \geq 0}\ell(\F_{n+1}/x\F_n)$ and $e_2^\F(N) = \sum_{n \geq 1}n\ell(\F_{n+1}/x\F_n)$.
\end{enumerate}
\end{proposition}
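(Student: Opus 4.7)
All three statements descend from a single length identity in (1); (2) and (3) are then straightforward Hilbert-series calculus.

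For (1), the key input is that $x$ is $N$-regular (guaranteed because $\depth N \geq 1$). The injectivity of multiplication by $x$ on every $N/\F_n$ produces a short exact sequence
$$0 \rt N/\F_n \xrightarrow{\cdot x} N/x\F_n \rt N/xN \rt 0,$$
from which I would read off $\ell(\F_{n+1}/x\F_n) = \ell(N/xN) - H(\F,n)$ by a single subtraction (using $x\F_n \subseteq \F_{n+1} \subseteq N$ and the additivity of length). To upgrade $\ell(N/xN)$ to $e^\F(N)$ I would combine the hypothesis $\F_{n+1} = x\F_n$ for $n \gg 0$ (which forces $\ell(\F_{n+1}/x\F_n) = 0$ eventually, so the displayed identity gives $H(\F,n) = \ell(N/xN)$ for $n \gg 0$) with the fact that in dimension one the Hilbert function stabilizes at $h^\F_N(1) = e^\F(N)$.

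For (2), I would simply multiply the identity of (1) by $z^n$, sum over $n \geq 0$, and use the definition $\sum H(\F,n) z^n = h^\F_N(z)/(1-z)$; the geometric series absorbs the constant $e^\F(N)$ and clearing the denominator gives $h^\F_N(z) = e^\F(N) + (z-1)\rho^\F(z)$. Note that $\rho^\F(z)$ is genuinely a polynomial because $\ell(\F_{n+1}/x\F_n)$ vanishes for $n \gg 0$.

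For (3), I would differentiate the formula of (2) once and twice, evaluate at $z = 1$, and invoke the identities $e_i^\F(N) = (h^\F_N)^{(i)}(1)/i!$ from the preliminaries; the $(z-1)$ factor kills all but the leading terms, giving $e_1^\F(N) = \rho^\F(1)$ and $e_2^\F(N) = (\rho^\F)'(1)$, which unravel into the claimed sums. The only subtle ingredient in the entire argument is the identification $e^\F(N) = \ell(N/xN)$ in step (1); once that is secured, everything else is elementary length arithmetic and formal manipulation of the Hilbert series.
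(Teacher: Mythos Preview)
Your argument is correct and follows essentially the same route as the paper: both proofs reduce (1) to length additivity along the chain $x\F_n \subseteq \F_{n+1} \subseteq \F_n \subseteq N$ together with the identity $\ell(N/xN) = e^\F(N)$, and then obtain (2) and (3) by formal Hilbert-series manipulation. The only minor difference is that you derive $\ell(N/xN) = e^\F(N)$ internally from the asymptotic vanishing of $\ell(\F_{n+1}/x\F_n)$, whereas the paper invokes it directly as a known property of superficial elements; your packaging via the short exact sequence $0 \to N/\F_n \xrightarrow{\cdot x} N/x\F_n \to N/xN \to 0$ is equivalent to the paper's chain computations.
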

\begin{proof}
  (1) Consider the chain $\F_n \supseteq \F_{n+1} \supseteq x\F_n$. This yields
  $$H(\F, n) = \ell(\F_n/x\F_n) - \ell(\F_{n+1}/x\F_n).$$
The chain $N \supseteq xN \supseteq x\F_n$ yields
$$ \ell(N/x\F_n) = e^\F(N) + \ell(xN/x\F_n) = e^\F(N) + \ell(N/\F_n).$$
Finally the chain  $N \supseteq \F_n \supseteq x \F_n$ yields
$$ \ell(N/xF_n) =  \ell(N/\F_n) + \ell(\F_n/x\F_n).$$
Comparing second and third equalities we get $\ell(\F_n/x\F_n) = e^\F(N)$. The result follows.

(2) folllows from (1) and (3) follows from (2).
\end{proof}

\s \label{dim2}
Let $(A,\m)$ be a local ring with infinite residue field and let $I$ be an $\m$-primary ideal in $A$.
Let $N$ be \CM \ $A$-module of dimension two and let $\F = \{ \F_n \}_{n \geq 0}$ be an $I$-stable filtration on $N$. Let $x, y$ be $N$-superficial sequence with respect to $\F$.  Set $J = (x,y)$. Note $\F_{n+1} = J\F_n$ for all $n \gg 0$. Let $\ov{\F}$ be the induced filtration on $\ov{N} = N/xN$. Then note that  we have an exact sequence
$$ \frac{\F_{n+1} \colon x}{\F_n} \xrightarrow{f} \frac{\F_{n+1}}{J \F_n} \xrightarrow{\pi} \frac{\ov{\F}_{n+1}}{y \ov{\F_n}} \rt 0,$$
where $\pi$ is the obvious map and $f(p + \F_n) = xp + J\F_n$.
\begin{proposition}\label{dim2-prop}
(with hypotheses as in \ref{dim2})
Assume $\depth G_\F(N) \geq 1$. Then
$$e_1^\F(N) = \sum_{n \geq 0}\ell(\F_{n+1}/J\F_n) \quad \text{ and}  \quad e_2^\F(N) = \sum_{n \geq 1}n\ell(\F_{n+1}/J\F_n).$$
\end{proposition}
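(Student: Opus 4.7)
The strategy is to pass to the quotient $\ov{N} = N/xN$, which is \CM \ of dimension one since $x$ is $N$-regular, and then apply Proposition \ref{dim1-prop}.

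The hypothesis $\depth G_\F(N) \geq 1$ together with $x$ being $N$-superficial \wrt \ $\F$ forces $x^*$ to be $G_\F(N)$-regular, and from this one deduces the key equality $(\F_{n+1}\colon_N x) = \F_n$ for every $n \geq 0$. Hence the left-hand term of the three-term exact sequence preceding the proposition vanishes, so the map $\pi$ becomes an isomorphism
$$\frac{\F_{n+1}}{J\F_n} \xrightarrow{\ \sim \ } \frac{\ov{\F}_{n+1}}{y\ov{\F}_n} \quad \text{for all } n \geq 0.$$
Moreover, the regularity of $x^*$ gives the short exact sequence $0 \to G_\F(N)(-1) \xrightarrow{x^*} G_\F(N) \to G_{\ov{\F}}(\ov{N}) \to 0$ of graded modules (Valabrega--Valla), from which $h_{\ov{\F}}(z) = h_\F(z)$ at the level of Hilbert series; consequently $e_i^\F(N) = e_i^{\ov{\F}}(\ov{N})$ for every $i \geq 0$.

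Now apply Proposition \ref{dim1-prop}(3) to the triple $(\ov{N}, \ov{\F}, y)$, noting that $y$ is $\ov{N}$-superficial \wrt \ $\ov{\F}$ by the definition of an $N$-superficial sequence \wrt \ $\F$. This yields $e_1^{\ov{\F}}(\ov{N}) = \sum_{n \geq 0}\ell(\ov{\F}_{n+1}/y\ov{\F}_n)$ and $e_2^{\ov{\F}}(\ov{N}) = \sum_{n \geq 1}n\ell(\ov{\F}_{n+1}/y\ov{\F}_n)$. Substituting the displayed isomorphism and the equalities $e_i^\F(N) = e_i^{\ov{\F}}(\ov{N})$ delivers the claimed formulas.

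The only delicate point in this plan is the preservation of the full $h$-polynomial under passage to $\ov{\F}$: the generic reduction statement for Hilbert coefficients recalled in the preliminaries is only valid in the range $i \leq \dim N - 1 = 1$, so the equality for $i = 2$ genuinely requires $x^*$ to be $G_\F(N)$-regular --- which is exactly what $\depth G_\F(N) \geq 1$ supplies via the chosen superficial element.
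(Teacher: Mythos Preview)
Your proof is correct and follows the same line as the paper's: reduce to dimension one via the quotient by $x$, use the vanishing of $(\F_{n+1}\colon x)/\F_n$ to identify $\ell(\F_{n+1}/J\F_n)$ with $\ell(\ov{\F}_{n+1}/y\ov{\F}_n)$, and then invoke Proposition~\ref{dim1-prop}. The paper's argument is terser---it simply asserts the equalities $e_i^\F(N)=e_i^{\ov{\F}}(\ov{N})$ for $i=1,2$ and $(\F_{n+1}\colon x)=\F_n$ as consequences of $\depth G_\F(N)>0$---whereas you supply the mechanism (regularity of $x^*$, the resulting short exact sequence on associated graded modules, and hence equality of $h$-polynomials), which is a welcome clarification, particularly your remark that the preservation of $e_2$ is exactly where the depth hypothesis is used.
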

\begin{proof}
As $\depth G_\F(N) > 0$ we have
$$e_1^\F(N) = e_1^{\ov{\F}}(\ov{N}) \quad \text{and} \quad e_2^\F(N) = e_2^{\ov{\F}}(\ov{N}).$$
We also have $(\F_{n+1} \colon x) = \F_n$ for all $n \geq 0$. So by above exact sequence we get
$$   \ell\left(\frac{\F_{n+1}}{J \F_n}\right)  = \ell\left(\frac{\ov{\F}_{n+1}}{y \ov{\F_n}}\right) \quad \text{for all $n \geq 0$}. $$
The result now follows from \ref{dim1-prop}.
\end{proof}
We now give
\begin{proof}[Proof of Theorem \ref{e2}]
We may assume that the residue field of $A$ is infinite.
Let $\G$ be the Ratliff-Rush filtration of $\F$. Then note that $e_2^\G(N) = e_2^\F(N)$ (see \ref{basic-rr}(3)) and $H^1(L^\G(N)) = H^1(L^\F(N))$, see \ref{l-rr}. So we may replace $\F$ by $\G$. By
\ref{basic-rr}(4) we have $\depth G_\G(N) > 0$. So by \ref{dim2-prop}  we have
\[
0 = e_2^\G(N) = \sum_{n \geq 1}n\ell(\G_{n+1}/J\G_n).
\]
It follows that $\G_{n+1} = J\G_n$ for all $n \geq 1$. So
$\G_{n+1}\cap J N = J\G_n$ for all $n \geq 0$. Therefore $G_\G(N)$ is \CM \ by Valabrega-Valla Theorem. So $H^1(L^\G(N)) = 0$, by \ref{l-induct}. The result follows.
\end{proof}
\section{Some preliminaries on generalized Narita ideals}
In this section we discuss some preliminaries on generalized  Narita ideals that we need.

\s\label{prev-narita} Let $M$ be a \CM \ $A$-module of dimension $r \geq 2$ and let $I$ be an $\m$-primary ideal. In \cite[6.2]{Pu6} we proved that $e_2^I(M) = \cdots = e_r^I(M) = 0$ if and only if $\wt{G}_I(M)$, the associated graded module of the Ratliff-Rush filtration of $M$ \wrt \ $I$,
is \CM \ with minimal multiplicity, i.e., $\deg \wt{h}_{I, M}(z) \leq 1$.
Furthermore we have $H^i(L^I(M)) = 0$ for $1 \leq i \leq r -1$.

It is convenient to prove the following:
\begin{theorem}\label{main-body}
Let $(A,\m)$ be a \CM \ local ring of dimension $d \geq 2$. Let $I$ be an $\m$-primary generalized  Narita ideal. Let  $M$ be a MCM $A$-module. Then
 \begin{enumerate}[\rm (1)]
   \item $e_i^I(M) = 0$ for $2 \leq i \leq d$.
   \item $H^i(L^I(M)) = 0$ for $1\leq i \leq d-1$.
   \item $\wt{G}_I(M)$ is \CM \ with minimal multiplicity.
   \item $G_{I^n}(M)$ is \CM \ for all $n \gg 0$.
   \item $G_I(M)$ is generalized \CM.
 \item Let $H = A^r \rt M \rt 0$ be a projective cover. Then the map $H^0(L^I(H)) \rt H^0(L^I(M))$ is surjective.
 \end{enumerate}
 \end{theorem}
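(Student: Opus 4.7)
The proof proceeds by induction on $d \geq 2$, establishing all six assertions at each step. By \ref{prev-narita} applied to $M$ itself (a CM module of dimension $r = d$), conditions (1), (2), (3) are equivalent, and (4), (5) are standard consequences of (3); so the substantive content is (1)/(2) together with (6), the latter being carried along because it is needed as input at the next inductive step. A preliminary observation: \ref{prev-narita} applied to $M = A$ (using the hypothesis on $I$) gives $H^i(L^I(A)) = 0$ for $1 \leq i \leq d-1$, and hence $H^i(L^I(A^r)) = 0$ in the same range for any $r$.

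\textbf{Base case $d = 2$.} Take a projective cover $0 \to K \to H := A^r \to M \to 0$; the depth lemma gives that $K$ is MCM. Equip $K$ with the $I$-stable filtration $\F_n := I^n H \cap K$ (Artin--Rees). A length comparison on $0 \to K/\F_{n+1} \to H/I^{n+1}H \to M/I^{n+1}M \to 0$ yields
\[
e_i^\F(K) = r \cdot e_i^I(A) - e_i^I(M) \quad \text{for all } i,
\]
so in particular $e_2^\F(K) + e_2^I(M) = 0$. By \ref{basic-rr}(4), $\depth G_{\wt{\F}}(K) > 0$, whence \ref{dim2-prop} gives $e_2^\F(K) = e_2^{\wt{\F}}(K) \geq 0$; likewise $e_2^I(M) \geq 0$, forcing both to vanish. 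Lemma \ref{e2} then yields $H^1(L^\F(K)) = 0$ and $H^1(L^I(M)) = 0$. Crucially, because $\F$ is the \emph{restricted} (not the $I$-adic) filtration on $K$, one has a genuine short exact sequence of $L$-modules
\[
0 \to L^\F(K) \to L^I(H) \to L^I(M) \to 0,
\]
whose long cohomology sequence contains $H^0(L^I(H)) \to H^0(L^I(M)) \to H^1(L^\F(K)) = 0$, proving (6). Together with \ref{prev-narita} this gives (1)--(5) as well.

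\textbf{Inductive step $d \geq 3$.} Passing to the faithfully flat extension $A[X]_{\m A[X]}$ if necessary, choose $x \in I$ that is $A$-, $H$-, and $M$-superficial with respect to $I$. Set $\ov A = A/(x)$, $\ov I = I\ov A$, $\ov M = M/xM$, $\ov H = H/xH$. Superficial reduction yields $e_i^{\ov I}(\ov A) = 0$ for $2 \leq i \leq d-1$, so $\ov I$ is a generalized Narita ideal in the $(d-1)$-dimensional ring $\ov A$ and $\ov M$ is MCM over $\ov A$. The inductive hypothesis provides $H^i(L^{\ov I}(\ov M)) = 0$ for $1 \leq i \leq d-2$ and the surjectivity in (6) for $\ov M$. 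By \ref{l-induct}(2) this upgrades to $H^i(L^I(M)) = 0$ for $2 \leq i \leq d-1$. To establish (6) for $M$, consider the commutative square whose rows are $H^0(L^I(H)) \to H^0(L^I(M))$ and $H^0(L^{\ov I}(\ov H)) \to H^0(L^{\ov I}(\ov M))$, with vertical maps the connecting maps $\rho^x_*$ from \ref{II-FES}: the bottom row is surjective by inductive (6), the left column by $H^1(L^I(H)) = 0$, and the long cohomology sequence (\ref{longH}) identifies the kernel of each right column with the image of multiplication by $u = xt$. A diagram chase shows that the defect in lifting a given $m \in H^0(L^I(M))$ lies in $u \cdot H^0(L^I(M))$ of strictly lower degree, so iterating and invoking the $*$-Artinian vanishing (\ref{Artin}(a)) terminates and produces the lift. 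The same square, now with (6) known on both rows, shows $\rho^x_* \colon H^0(L^I(M)) \twoheadrightarrow H^0(L^{\ov I}(\ov M))$ is surjective; combined with $H^1(L^{\ov I}(\ov M)) = 0$ this forces $\Psi_u \colon H^1(L^I(M))(-1) \to H^1(L^I(M))$ injective, so \ref{Artin-vanish}(b) gives $H^1(L^I(M)) = 0$, completing the induction.

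\textbf{Main obstacle.} The principal difficulty is the careful logical ordering in the inductive step: (6) for $M$ must be established \emph{before} being invoked in the $H^1$-argument, without circularity. The key is the iterated degree-descent diagram chase, whose termination relies on the $*$-Artinian vanishing of $H^0(L^I(M))_n$ for large $n$ together with the left-column surjectivity coming from $H^1(L^I(H)) = 0$. A secondary subtlety lies in the base case: the positivity $e_2^\F(K) \geq 0$ for the restricted filtration $\F = \{I^n H \cap K\}$ on $K$ (which is not itself the $I$-adic filtration) requires passing to the Ratliff--Rush filtration of $\F$ to apply the two-dimensional coefficient formula \ref{dim2-prop}, rather than appealing directly to the $I$-adic theory.
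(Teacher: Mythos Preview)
Your plan contains a real gap: the assertion that \ref{prev-narita} makes (1), (2), (3) equivalent is wrong. \ref{prev-narita} only gives $(1)\Leftrightarrow(3)$ with $(2)$ as a consequence; the implication $(2)\Rightarrow(1)$ fails in general --- for any two-dimensional \CM\ module $M$ with $\wt G_I(M)$ \CM\ of reduction number $\geq 2$ one has $H^1(L^I(M))=0$ yet $e_2^I(M)>0$. Since your inductive step produces only (2) and (6), you never actually establish $e_d^I(M)=0$, and statements (1), (3), (4), (5) remain unproved. The paper closes this by a separate $a$-invariant argument: the automatic surjection $G_I(H)\twoheadrightarrow G_I(M)$ gives $a_d(G_I(M))\leq a_d(G_I(H))=a_d(\wt G_I(A))\leq 1-d$ (the last inequality because $\wt G_I(A)$ has minimal multiplicity by the hypothesis on $I$ and \ref{prev-narita}); combined with (2), which makes $\wt G_I(M)$ \CM, this forces $\deg \wt h_{I,M}\leq 1$, i.e.\ (3), and hence (1). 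You must insert this step. A smaller slip: the termination of your degree-descent chase for (6) is not via \ref{Artin}(a) (vanishing for $n\gg 0$, the wrong direction) but via the trivial fact that $L^I(M)_n=0$, hence $H^0(L^I(M))_n=0$, for $n<0$.

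With that fix your route is valid and genuinely different from the paper's. In the base case you replace the paper's direct appeal to Narita's reduction-number theorem by the positivity argument $e_2^\F(K)\geq 0$, $e_2^I(M)\geq 0$ summing to zero; in the inductive step you replace the paper's use of Lemma~\ref{rachel} on the syzygy filtration by the commutative $H^0$-square and degree-descent chase, carrying (6) as an inductive input. This avoids Lemma~\ref{rachel} entirely. Note also that your deduction of $H^1(L^I(M))=0$ does not actually require (6) for $M$ first: surjectivity of the left column and the bottom row already force the right column $\rho^x_*$ surjective, whence $\Psi_u$ is a monomorphism on $H^1$ and \ref{Artin-vanish}(b) finishes --- so your ``main obstacle'' is less delicate than you feared.
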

\section{Proof of Theorem \ref{main-body} when $d = 2$}
\begin{proof}
We may assume that the residue field of $A$ is infinite,

 (1) As $e_2^I(A) = 0$, by Narita there exists $n_0$ such that for $n \geq n_0$ then ideal $I^n$ has reduction number one. As $M$ is MCM $A$-module it follows that $M$ has minimal multiplicity with respect to $I^n$ for $ n \geq n_0$. So $e_2^{I^n}(M) = 0$. But $e_2^I(M) = e_2^{I^n}(M)$. It follows that $e_2^I(M) = 0$.

 (2), (3) and (4) By \cite[3.5, 6.2]{Pu6},
  $H^1(L^I(M)) = 0$ and $\wt{G}_I(M)$ is \CM \ with minimal multiplicity. It follows that $G_{I^n}(M)$ is \CM  \ for $n \gg 0$.

 (4) As $G_{I^n}(M)$
is \CM \ for all $n \gg 0$ it follows from \cite[7.8]{Pu5}
  $G_I(M)$ is generalized \CM.

 (5) Let $N = \Syz^A_1(M)$. The filtration $\F = \{\F_n = I^nH \cap N \}_{n \geq 0}$ is an $I$-stable filtration on $N$. We have an exact sequence
 $$ 0 \rt L^\F(N) \rt L^I(H) \rt L^I(M) \rt 0.$$
 So $e_2^\F(N) = 0$. It follows from \ref{e2} that $H^1(L^\F(N)) = 0$. So the map $H^0(L^I(H)) \rt H^0(L^I(M))$ is surjective.
\end{proof}
\section{A Lemma when $d \geq 3$}
We need the following result to prove our main theorem.
\begin{lemma}\label{rachel}
Let $(A,\m)$ be a \CM \ local ring and let $N$ be a \CM \ $A$-module of dimension $r \geq 3$.  Let $I$ be an $\m$-primary ideal in $A$ and  let $\F = \{ \F_n \}_{n \geq 0}$ be a
Assume $\F$ is an $I$-stable filtration on $N$.
\begin{enumerate}[\rm (1)]
  \item $e_i^\F(N) = 0$ for $i = 2, \ldots, r$.
  \item $H^i(L^\F(N))$ has finite length for $i = 1, \ldots, r -1$.
\end{enumerate}
Then $H^i(L^\F(N)) = 0$  for $i = 1, \ldots, r -1$.
\end{lemma}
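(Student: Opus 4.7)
The plan is to induct on $r = \dim N$, with Lemma \ref{e2} serving as the base via reduction modulo a superficial element. After passing from $A$ to $A[X]_{\m A[X]}$ I may assume the residue field is infinite. Choose $x \in I$ that is $N$-superficial with respect to $\F$; since $N$ is \CM, $x$ is $N$-regular, and the quotient $\ov{N} = N/xN$ is \CM \ of dimension $r - 1$, carrying the quotient filtration $\ov{\F}$. The main tool will be the long exact sequence \ref{longH} relating the local cohomologies of $L^\F(N)$ and $L^{\ov{\F}}(\ov{N})$, in which $\Psi_u$ denotes multiplication by $u = xt$.

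\textbf{Base case $r = 3$.} Since $e_i^{\ov{\F}}(\ov{N}) = e_i^\F(N)$ for $i \leq r - 1 = 2$, we have $e_2^{\ov{\F}}(\ov{N}) = 0$, so Lemma \ref{e2} yields $H^1(L^{\ov{\F}}(\ov{N})) = 0$. The stretch
$$H^1(L^\F(N))(-1) \xrightarrow{\Psi_u} H^1(L^\F(N)) \to H^1(L^{\ov{\F}}(\ov{N})) = 0$$
of \ref{longH} makes $\Psi_u$ surjective, hence an isomorphism by the finite-length hypothesis (2), and in particular a monomorphism; as $H^1(L^\F(N))$ is *-Artinian by \ref{Artin}(a), \ref{Artin-vanish}(b) forces $H^1(L^\F(N)) = 0$. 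Next, the stretch
$$0 = H^1(L^{\ov{\F}}(\ov{N})) \to H^2(L^\F(N))(-1) \xrightarrow{\Psi_u} H^2(L^\F(N))$$
makes $\Psi_u$ injective on the *-Artinian module $H^2(L^\F(N))$, so \ref{Artin-vanish}(b) gives $H^2(L^\F(N)) = 0$.

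\textbf{Inductive step $r \geq 4$.} The aim is to apply the lemma to $(\ov{\F}, \ov{N})$, which has dimension $r - 1 \geq 3$. Hypothesis (1) transfers via $e_i^{\ov{\F}}(\ov{N}) = e_i^\F(N) = 0$ for $2 \leq i \leq r - 1$. For hypothesis (2), extract from \ref{longH} the piece
$$H^i(L^\F(N)) \to H^i(L^{\ov{\F}}(\ov{N})) \to H^{i+1}(L^\F(N))(-1)$$
for each $i$ with $1 \leq i \leq r - 2$; both flanking terms have finite length by assumption (since $i, i+1 \in \{1, \ldots, r-1\}$), so $H^i(L^{\ov{\F}}(\ov{N}))$ does too. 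The inductive hypothesis then yields $H^i(L^{\ov{\F}}(\ov{N})) = 0$ for $1 \leq i \leq r - 2$. Reading \ref{longH} once more: for $1 \leq i \leq r - 2$, the right neighbor vanishes, so $\Psi_u$ is surjective on $H^i(L^\F(N))$ and the base-case argument kills it; for $i = r - 1$, the left neighbor $H^{r-2}(L^{\ov{\F}}(\ov{N}))$ vanishes, so $\Psi_u$ is injective on the *-Artinian module $H^{r-1}(L^\F(N))$, and \ref{Artin-vanish}(b) finishes.

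The main obstacle is ensuring the finite-length hypothesis survives the reduction modulo $x$. This is precisely why (2) is demanded on the full range $i = 1, \ldots, r - 1$: the three-term extraction bounding $H^i(L^{\ov{\F}}(\ov{N}))$ requires both $H^i(L^\F(N))$ and $H^{i+1}(L^\F(N))$ to have finite length simultaneously, and the top index $i = r - 1$ can only be closed out via the *-Artinian argument of \ref{Artin-vanish}(b) rather than by a further induction.
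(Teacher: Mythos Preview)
Your proof is correct and follows essentially the same inductive strategy as the paper's: reduce modulo a superficial element, invoke Lemma~\ref{e2} at the bottom, and use the finite-length hypothesis together with \ref{Artin-vanish} to kill the $H^i(L^\F(N))$. The only cosmetic difference is that the paper first replaces $\F$ by its Ratliff-Rush filtration $\G$ (so that $\mathcal{B}^\G(x,N)=0$ and the second fundamental sequence becomes a genuine short exact sequence), whereas you work directly with \ref{longH}; since the $\mathcal{B}$-term only affects degree~$0$ cohomology, this makes no difference to the argument.
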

\begin{proof}
We may assume the residue field of $A$ is infinite. It suffices to prove the result for
 $\G$  the Ratliff-Rush filtration with respect to $\F$ on $N$.
  We prove the result by induction on $r$.
  We first consider the case $r = 3$.  Let $x$ be $\G$-superficial and let $\ov{\G}$ be the quotient filtration on $\ov{N} = N/xN$. We note that $e_2^\G(N) = e_3^\G(N) = 0$. So $e_2^{\ov{\G}}(\ov{N}) = 0$. The exact sequence
  $$0 \rt L^\G(N)(-1)\xrightarrow{xt} L^\G(N) \rt L^{\ov{\G}}(\ov{N}) \rt 0, $$
  induces a long exact sequence in cohomology
 \begin{align*}
   &\rt H^1(L^\G(N))(-1) \rt H^1(L^\G(N)) \rt  H^1(L^{\ov{\G}}(\ov{N})) \\
   &\rt H^2(L^\G(N))(-1) \rt H^2(L^\G(N))
    \end{align*}

As $e_2^{\ov{\G}}(\ov{N}) = 0$ it follows from \ref{e2} that $ H^1(L^{\ov{\G}}(\ov{N}))  = 0$. As $H^i(L^\G(N))$ has finite length for $i = 1, 2$ we get   $H^i(L^\G(N))(-1) \cong H^i(L^\G(N))$.
This implies $H^i(L^\G(N)) = 0$ for $i = 1, 2$. The result follows.

We now assume that $r \geq 4$ and the result holds for $r-1$. Let $\G$ be the Ratliff-Rush filtration with respect to $\F$ on $N$. Let $x$ be $\G$-superficial and let $\ov{\G}$ be the quotient filtration on $\ov{N} = N/xN$. We note that $e_i^\G(N)  = 0$ for $2\leq i \leq r$. So $e_i^{\ov{\G}}(\ov{N}) = 0$ for $2\leq i \leq r -1$. The exact sequence
  $$0 \rt L^\G(N)(-1)\xrightarrow{xt} L^\G(N) \rt L^{\ov{\G}}(\ov{N}) \rt 0, $$
  induces a long exact sequence in cohomology for $i \geq 1$
 \begin{align*}
   &\rt H^i(L^\G(N))(-1) \rt H^i(L^\G(N)) \rt   H^i(L^{\ov{\G}}(\ov{N}))\\
   &\rt H^{i+1}(L^\G(N))(-1) \rt H^{i+1}(L^\G(N))
    \end{align*}
    We note that  $ H^1(L^{\ov{\G}}(\ov{N}))  $ has finite length for $i = 1, \ldots, r-2$. So by induction hypothesis  $H^i(L^{\ov{\G}}(\ov{N})) = 0$ for $i = 1, \ldots, r-2$.
    As $H^i(L^\G(N))$ has finite length for $i = 1, \ldots, r-1$ we get   $H^i(L^\G(N))(-1) \cong H^i(L^\G(N))$.
This implies $H^i(L^\G(N)) = 0$ for $1\leq i \leq r-1$ . The result follows.
\end{proof}
\section{Proof of Theorem \ref{main}}
In this section we give a proof of Theorem \ref{main-body}   by induction on $d = \dim A$. This will prove Theorem \ref{main}. We have already proved in the case when $d = 2$. In this section we first prove when $d = 3$ and then prove by induction the case when $d \geq 4$.
\begin{proof}
    Let $p \colon H = A^r \rt M$ be a projective cover. Let $N = \Syz^A_1(M) = \ker p$. The filtration $\F = \{\F_n = I^nH \cap N \}_{n \geq 0}$ is an $I$-stable filtration on $N$. We have an exact sequence
    \begin{equation*}
       0 \rt L^\F(N) \rt L^I(H) \rt L^I(M) \rt 0. \tag{$\dagger$}
    \end{equation*}

 We first consider the case when $d = 3$.
 So $e_2^\F(N) = e_2^I(M) = 0$. Let $x \in I $ be superficial with respect to the $I$-adic filtration on $H$ and $M$ and the filtration $\F$ on $N$. We note that
 as $e_2^I(\ov{M}) = e_2^{\ov{\F}}(N) = 0$ we have $H^1(L^I(\ov{M})) = H^1(L^{\ov{\F}}(\ov{N}) = 0$, by \ref{e2}. It follows that $H^2(L^I(M)) = H^2(L^\F(N)) = 0$, see \ref{l-induct}.
 By \ref{prev-narita}, we also have $H^1(L^I(H)) = H^2(L^I(H)) =0$. By the short exact sequence above we get $H^1(L^I(M)) = 0.$
 Thus the associated graded module of the Ratliff-Rush filtration on $M$ is \CM. Also note we have a surjection $G_I(H) \rt G_I(M)$. It follows that $a_3(G_I(M)) \leq a_3(G_I(H)) = -2$. Note
 $a_3(\wt{G}_I(M)) = a_3(G_I(M)) \leq -2$. It follows that $\wt{G}_I(M)$ has minimal multiplicity. So in particular $e_3^I(M) = 0$.
As  $\wt{G}_I(M)$ is \CM \  we have $G_{I^n}(M)$ is \CM \ for all $n \gg 0$.
 By \cite[7.8]{Pu5} we also get that $G_I(M)$ is generalized \CM.
 Note $e_3^{\F}(N) = 0$. By the above exact sequence as $H^1(L^I(H)) = 0$ it follows that $H^1(L^\F(N)) $ has finite length. Also $H^2(L^{\F}(N)) = 0$. By Lemma \ref{rachel} it follows that
 $H^1(L^\F(N)) = 0$. Thus the map $H^0(L^I(H)) \rt H^0(L^I(M))$ is surjective.

 We now assume $d \geq 4$ and the result is known for $d -1$.
 Let $x \in I $ be superficial with respect to the $I$-adic filtration on $H$ and $M$ and the filtration $\F$ on $N$.
 We have $e_i^I(\ov{A}) = 0$ for $i = 2,\ldots, d - 1$. By induction hypotheses we have $e_i^I(\ov{M}) = 0$ for $i = 2, \ldots, d -1$ and $H^i(L^I(\ov{M})) = 0$ for $i = 1, \ldots, d -2$. Therefore
 $H^i(L^I(M)) = 0$ for $2 \leq i \leq d -1$, see \ref{l-induct}. We also have $H^i(L^I(H))  = 0$ for $1 \leq i \leq d -1$
 We have an  exact sequence
 \begin{equation*}
       0 \rt C \rt L^{\ov{\F}}(\ov{N}) \rt L^I(\ov{H}) \rt L^I(\ov{M}) \rt 0, \tag{$*$}
    \end{equation*}
    where $C$ has finite length. As $e_i^I(\ov{H}) = e_i^I(\ov{M}) = 0$ for $2 \leq i \leq d -1$ it follows that $e_i^{\ov{F}}(N) = 0$ for $2 \leq i \leq d -1$.
    Also by (*) it follows that $H^i(L^{\ov{\F}}(N)) = 0$ for $i = 2, \ldots d-2$ and has finite length when $i = 1$. By \ref{rachel} it follows that $H^i(L^{\ov{\F}}(N)) = 0$ for $i = 1, \ldots d-2$.
    Thus $H^i(L^\F(N)) = 0$ for $i =  2, \ldots, d -1$, see \ref{l-induct}. By $(\dagger)$ it follows that $H^1(L^I(M)) = 0$.  Thus the associated graded module of the Ratliff-Rush filtration on $M$ is \CM. Also note we have a surjection $G_I(H) \rt G_I(M)$. It follows that $a_d(G_I(M)) \leq a_d(G_I(H)) = -d +1$. Note
 $a_d(\wt{G}_I(M)) = a_d(G_I(M)) \leq -d +1$. It follows that $\wt{G}_I(M)$ has minimal multiplicity. So in particular $e_d^I(M) = 0$.
 As $\wt{G}_I(M)$  is \CM \ we have $G_{I^n}(M)$ is \CM \ for $n \gg 0$.
 By \cite[7.8]{Pu5} we also get that $G_I(M)$ is generalized \CM.
 From the exact sequence $(\dagger)$ we get that $e_i(L^\F(N)) = 0$ for $2 \leq i \leq d$. We also have  $H^i(L^\F(N)) = 0$ for $i =  2, \ldots, d -1$ and $H^1(L^\F(N))$ is a quotient of $H^0(L^I(M))$ and so has finite length. By Lemma \ref{rachel},   we get $H^1(L^\F(N)) = 0$ also. Thus the map $H^0(L^I(H)) \rt H^0(L^I(M))$ is surjective.
\end{proof}

\section{Regularity of $G_I(M)$}
In this section we prove Theorem \ref{reg}.
\begin{proof}
If $\depth G_I(A) > 0$ then $I$ has minimal multiplicity. It follows that $G_I(M)$ is also \CM \ for any MCM module. So $\reg G_I(M) \leq 1$.

Next assume $\depth G_I(A) = 0$. So $\e H^0(L^I(A)) \geq 0$.

  First assume $\depth G_I(M) > 0$. Then $G_I(M) = \wt{G}_I(M)$ has minimal multiplicity. In particular $\reg G_I(M) \leq 1$.
  Next assume that $\depth G_I(M) = 0$. Then as $H^i(L^I(M)) = 0$ for $i = 1, \ldots, d -1$ we get that $H^i(G_I(M)) = 0$ for $i = 2, \ldots, d -1$ and we have a sequence
  $$ 0 \rt H^0(G_I(M)) \rt H^0(L^I(M)) \rt H^0(L^I(M))(-1) \rt H^1(G_I(M)) \rt 0. $$
  So we obtain
  $$a_0(G_I(M)) < a_1(G_I(M)) = \e H^0(L^I(M)) + 1. $$
  As $\wt{G}_I(M)$ has minimal multiplicity we have $a_d(G_I(M)) = a_d(\wt{G}_I(M)) \leq  -d + 1$.
  So we have $\reg G_I(M) \leq \e  H^0(L^I(M)) + 2$. Let $p \colon H = A^r \rt M \rt 0$ be a minimal map. By \ref{main-body} we have a surjection $H^0(L^I(H)) \rt H^0(L^I(M))$. So $\e  H^0(L^I(M)) \leq  \e  H^0(L^I(H )) = \e  H^0(L^I(A)).$  So $\reg G_I(A) \leq \e H^0(L^I(A)) + 2$. Set $c_I = \e H^0(L^I(A)) + 2$. The result follows.
\end{proof}

\section{Example of generalized Narita ideals}
In this section we prove that every complete three dimensional \CM \ local ring containing a field of characteristic zero has a generalized Narita ideal.
The following example is taken from \cite[3.8]{Cpr}
 \begin{example}\label{CPR}
 Let $A = \mathbb{Q}[[x,y,z]]$. Let $I= (x^2-y^2, y^2-z^2, xy,xz,yz)$. Set $\m = (x,y,z)$. It can be verified that
 $I^2 =  \m^4$. So $\red(I^n) = 2$ for all $n \gg 0$.
 Using COCOA  we get
 $h_I(A,z) =  5 + 6z^2 - 4z^3+z^4$. So $e_j^I(A) = 0$ for $j = 2,3$.
 In \cite{Cpr} it is  proved that $\depth G_I(A) = 0$.
\end{example}

Let $K$ be a field of characteristic zero. Set $B = K[[x, y, z]]$.
We note that $A \rt B$ is a flat extension with fiber $K$. It follows that $J = IB$ is a generalized Narita ideal in $B$.

\begin{example}
Let $(R, \m)$  be a complete three dimensional \CM \ local ring containing a field of characteristic zero. Set $K = R/\m$. Then $R$ contains a subring $B = K[[x, y, z]]$ such that
$R$ is finite $B$-module. We note that as $R$ is \CM \ it follows that $R$ is a free $B$-module. By above assertion $B$ has a generalized  Narita ideal $J$. Then it is easy to verify that
$JR$ is a generalized Narita ideal in $R$.
\end{example}

\section{Proof of Theorem \ref{almost}}
In this section we give a proof of Theorem \ref{almost}.
\s Let $I$ be an $\m$-primary ideal in $A$. Let $\wh{\R} = \bigoplus_{n \in \Z}I^nt^n$ be the extended Rees algebra of $I$ (here we set $I^n = A$ for $n \leq 0$) considered as a subring of $A[t, t^{-1}]$. If $M$ is an $A$-module then set $\wh{\R}(M) = \bigoplus_{n \in \Z}I^nM$ to be the extended Rees module of $M$. Then it can be shown that components of local cohomology modules $H^i_{\R_+}(\wh{\R}(M))_n$ has finite length for all $n \in \Z$ and is zero for $n \gg 0$. Let $r = \dim M$. We have $H^i_{\R_+}(\wh{\R}(M)) = 0$ for $i >  r$.  We also have the following
\begin{equation*}
  P^1_I(M, n) - H^1_I(M, n) = \sum_{i = 0}^{r}(-1)^i\ell(H^i_{\R_+}(\wh{\R}(M)_{n+1}). \tag{$**$}
\end{equation*}
Note that the index in the right hand side is $n+1$ and not $n$.
These results were proved in the case when $M = A$ in \cite[4.1]{B}. Same proof works in general.

\s Consider the exact sequence
\[
0 \rt \wh{\R}(M) \rt M[t, t^{-1}] \rt L^I(M)(-1) \rt 0.
\]
This yields $L^I(M)(-1)$ ( and hence $L^I(M)$ a structure of a $\wh{R}$-module. Furthermore our earlier structure of $L^I(M)$ as a $\R$-module  is by restriction of scalars along the inclusion mapping $\R \hookrightarrow \wh{\R}$.

 \s If $M$ is \CM \ of dimension $r \geq 1$ then note that there exists \\ $x_1, \ldots, x_r \in I$ which is $M$-regular. It follows that
for $i = 0, \ldots, r-2$ we have isomorphism $H^i_{\R_+}(L^I(M))(-1) \cong H^{i+1}_{\R_+}(\wh{\R}(M))$ and $H^{r-1}_{\R_+}(L^I(M))(-1)$ is a submodule of $H^{r}_{\R_+}(\wh{\R}(M))$.

\s Let $\M$ be the maximal graded ideal of $\R$.  In \cite[3.1]{Pu7} we proved that the natural map $H^i_\M(L^I(M)) \rt H^i_{\R_+}(L^I(M))$ is an isomorphism for all $i \geq 0$.
Also as $G_I(M)$ is finitely generated $\R$-module
(with $G_I(M)_n$ of finite length for all $n$) we also have  that the natural map $H^i_\M(G_I(M)) \rt H^i_{\R_+}(G_I(M))$ is an isomorphism for all $i \geq 0$

\emph{Throughout this section by $H^i(-)$ we mean $H^i_{\R_+}(-)$}.

The following result is well-known. We give a proof due to lack of a suitable reference.
\begin{lemma}\label{a-inv-rr-g}
Let $(A, \m)$ be local and let $I$ be an $\m$-primary ideal in $A$. Let $M$ be an $A$-module of dimension $r$. It $H^r(G_I(M))_n = 0$ for $n \geq c$ then $H^r(\wh{\R}(M))_n = 0$ for $n \geq c$.
\end{lemma}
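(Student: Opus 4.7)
The plan is to exploit the short exact sequence of graded $\wh{\R}$-modules (and hence of $\R$-modules by restriction of scalars)
\[
0 \rt \wh{\R}(M)(1) \xrightarrow{t^{-1}} \wh{\R}(M) \rt G_I(M) \rt 0,
\]
where multiplication by $t^{-1}$ is, in internal degree $n$, the natural inclusion $I^{n+1}M \hookrightarrow I^nM$, so that its cokernel in degree $n$ is precisely $I^nM/I^{n+1}M = G_I(M)_n$. Injectivity is automatic because $t^{-1}$ acts as a unit on the ambient module $M[t,t^{-1}]$ in which $\wh{\R}(M)$ sits.

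Applying $H^\bullet = H^\bullet_{\R_+}$ and extracting cohomological degree $r$ in internal degree $n$ yields the exact fragment
\[
H^r(\wh{\R}(M))_{n+1} \rt H^r(\wh{\R}(M))_n \rt H^r(G_I(M))_n,
\]
after using the standard identification $H^r(\wh{\R}(M)(1))_n = H^r(\wh{\R}(M))_{n+1}$. Consequently, whenever $H^r(G_I(M))_n = 0$, the transition map $H^r(\wh{\R}(M))_{n+1} \rt H^r(\wh{\R}(M))_n$ is surjective.

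The conclusion then follows by a descending induction on $n$. By the general fact recorded at the beginning of this section, $H^r(\wh{\R}(M))_n = 0$ for all $n \gg 0$, so we may fix some $N$ with this property. Under the hypothesis that $H^r(G_I(M))_n = 0$ for every $n \geq c$, the surjections $H^r(\wh{\R}(M))_{n+1} \twoheadrightarrow H^r(\wh{\R}(M))_n$, valid for each $c \leq n \leq N-1$, propagate the vanishing downward from $N$, giving $H^r(\wh{\R}(M))_n = 0$ for all $n \geq c$.

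The only real pitfall in this plan is keeping the degree shift $(1)$ consistent so that the boundary map in the long exact sequence lands in the right internal degree; everything else is a routine application of the long exact sequence combined with the top-degree eventual vanishing already supplied by the setup.
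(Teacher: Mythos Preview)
Your proof is correct and follows essentially the same approach as the paper: both use the short exact sequence $0 \rt \wh{\R}(M)(1) \xrightarrow{t^{-1}} \wh{\R}(M) \rt G_I(M) \rt 0$, extract the degree-$n$ piece of the long exact sequence at cohomological level $r$, and then propagate the vanishing downward from the known vanishing for $n \gg 0$ via the resulting surjections. Your write-up even supplies a little extra justification (injectivity of $t^{-1}$ and the identification of the cokernel) that the paper leaves implicit.
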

\begin{proof}
  The exact sequence $0 \rt \wh{\R}(M)(+1) \xrightarrow{t^{-1}} \wh{\R}(M) \rt G_I(M) \rt 0$ induces a sequence in local cohomology
  \[
  H^r(\wh{\R}(M))_{n + 1} \rt H^r(\wh{\R}(M))_n \rt H^r(G_I(M))_n \rt 0.
  \]
  As $H^r(G_I(M))_n = 0$ for $n \geq c$ we get surjections $H^r(\wh{\R}(M))_{n + 1} \rt H^r(\wh{\R}(M))_n  \rt 0$ for $n \geq c$. As $H^r(\wh{\R}(M))_j = 0$ for $j \gg 0$ the result follows.
\end{proof}
We now give
\begin{proof}[Proof of Theorem \ref{almost}]
Let $x$ be $M\oplus A$-superficial \wrt \ $I$. Set  As $e_i^{\ov{I}}(\ov{A}) = 0$ for $2 \leq i \leq d -1$ it follows that $\ov{I}$ is a generalized Narita ideal in $\ov{A}$. So we have $e_i^{\ov{I}}(\ov{M}) = 0$. It follows that $H^i(L^I(\ov{M}) = 0$ for $1 \leq i \leq r -2$ and that $a_{d-1}(G_I(\ov{M}) \leq -d +2 < 0$. It follows that $H^i(L^I(M)) = 0$ for $2 \leq i \leq d-1$. Furthermore it is easily verified that $a_d(G_I(M)) < -1$. So \ref{a-inv-rr-g} we have $H^d(\wh{R}(M))_n = 0$ for $n \geq -1$

Computing  (**) for $n = -1$ we obtain
\[
(-1)^{d}e_d^I(M) = \sum_{i = 0}^{d}(-1)^i\ell(H^i(\wh{\R}(M)_{0}).
\]
We note that $H^0(\wh{\R}(M)) = 0$. We have for $i = 1,\ldots, d - 1$ isomorphisms \\ $H^i(\wh{\R}(M))_0 = H^{i-1}(L^I(M))_{-1}$. So we obtain
$H^i(\wh{\R}(M))_0 = 0$ for $i = 1$ and $i = 3,\ldots, r - 1$. We also have by $H^d(\wh{R}(M))_0 = 0$. Thus we have
\[
(-1)^{d}e_d^I(M) = \ell(H^1(L^I(M))_{-1})
\]
Thus $(-1)^{d}e_d^I(M) \geq 0$.

(II) If $e_I^I(M) \neq 0$ then $H^1(L^I(M))_{-1} \neq 0$. It follows from \cite[9.2]{Pu5} that \\ $\depth G_{I^n}(M) = 1$ for all $n \gg 0$.

(I) If $e_d^I(M) = 0$ then it follows from \ref{prev-narita} that the associated graded module of the Ratliff-Rush filtration on $M$ \wrt \ $I$ is \CM \ with  minimal multiplicity. So $G_{I^n}(M)$ is \CM \ for $n \gg 0$ and this forces by \cite[7.8]{Pu5} that $G_I(M)$ is generalized  \CM.  Next assume that $G_I(M)$ is generalized \CM. Then by \cite[5.2]{Pu6} $H^1(L^I(M))$ has finite length.
By \ref{longH} we have a sequence
\[
H^1(L^I(M))(-1) \rt H^1(L^I(M)) \rt H^1(L^I(\ov{M})) = 0.
\]
It follows that $H^1(L^I(M)) \cong H^1(L^I(M))(-1)$ and so $H^1(L^I(M)) = 0$. Thus the associated graded module of the Ratliff-Rush filtration on $M$ \wrt \ $I$ is \CM. As $e_2^I(M) = 0$ it follows that $\wt{G}_I(M)$ has minimal multiplicity. So $e_d^I(M) = 0$.
\end{proof}


\begin{thebibliography} {99}



\bibitem{B}
C.~Blancafort,
 \emph{On Hilbert functions and cohomology},
J. Algebra 192 (1997), no. 1, 439--459.



\bibitem {BH}  W. Bruns and J. Herzog,
Cohen-Macaulay Rings, revised edition,
Cambridge Studies in Advanced Mathematics, 39.
Cambridge University Press, 1998.

\bibitem{Cpr} A.~Corso, C.~Polini, and M.~Rossi, \emph{Depth of associated graded rings via {H}ilbert coefficients of ideals}, J. Pure Appl. Algebra \textbf{201} (2005), no.~1-3, 126--141.

\bibitem{H}
L.~T.~Hoa,
\emph{Postulation number of good filtrations},
Comm. Algebra   25 (1997), no. 6, 1961–1974.

\bibitem{N}
M.~Narita,
\emph{A note on the coefficients of Hilbert characteristic functions in semi-regular local rings},
Proc. Cambridge Philos. Soc.   59 (1963), 269--275.

\bibitem{Pu1} T.~J.~Puthenpurakal, \emph{Hilbert-coefficients of a {C}ohen-{M}acaulay module}, J. Algebra \textbf{264} (2003), no.~1, 82--97.

\bibitem{Pu5} \bysame, \emph{Ratliff-{R}ush filtration, regularity and depth of higher associated graded modules. {I}}, J. Pure Appl. Algebra \textbf{208} (2007), no.~1, 159--176.


\bibitem{Pu6} \bysame, \emph{Ratliff-{R}ush filtration, regularity and depth of higher associated graded modules. {II}}, J. Pure Appl. Algebra 221 (2017), no. 3, 611--631.

\bibitem{Pu7}
 \bysame,
 \emph{Bockstein cohomology of associated graded rings},
Acta Math. Vietnam. 44 (2019), no. 1, 285--306
\end{thebibliography}
\end{document}